\newtheorem{theorem}{\textbf{Theorem}}
\newtheorem{lemma}{\textbf{Lemma}}
\newtheorem{assumption}{Assumption}
\newtheorem{remark}{Remark}
\newtheorem{definition}{Definition}
\newtheorem{corollary}{Corollary}
\newcommand{\tabincell}[2]{\begin{tabular}{@{}#1@{}}#2\end{tabular}}
\begin{document}
\title{Heuristic Learning for Co-Design Scheme of Optimal Sequential Attack }
\author{Xiaoyu Luo$^\dag$, Haoxuan Pan$^\dag$, Chongrong Fang$^\dag$, Chengcheng Zhao$^\ddag$, Peng Cheng$^\ddag$, and Jianping He$^\dag$
\thanks{$^\dag$: The Department of Automation, Shanghai Jiao Tong University, and Key Laboratory of System Control and Information Processing, Ministry of Education of China, Shanghai 200240, China. E-mail: xyl.sjtu@sjtu.edu.cn, panhaoxuan@sjtu.edu.cn, crfang@sjtu.edu.cn, jphe@sjtu.edu.cn.}
\thanks{$^\ddag$: The State Key Laboratory of Industrial Control Technology and Institute of Cyberspace Research, Zhejiang University, China. E-mail: chengchengzhao@zju.edu.cn, lunarheart@zju.edu.cn.}
}

\maketitle

{\color{black}
\begin{abstract}
This paper considers a novel co-design problem of the optimal \textit{sequential} attack, whose attack strategy changes with the time series, and in which the \textit{sequential} attack selection strategy and \textit{sequential} attack signal are simultaneously designed. Different from the existing attack design works that separately focus on attack subsets or attack signals, the joint design of the attack strategy poses a huge challenge due to the deep coupling relation between the \textit{sequential} attack selection strategy and \textit{sequential} attack signal. In this manuscript, we decompose the sequential co-design problem into two equivalent sub-problems. Specifically, we first derive an analytical closed-form expression between the optimal attack signal and the sequential attack selection strategy. Furthermore, we prove the finite-time inverse convergence of the critical parameters in the injected optimal attack signal by discrete-time Lyapunov analysis, which enables the efficient off-line design of the attack signal and saves computing resources. Finally, we exploit its relationship to design a heuristic two-stage learning-based joint attack algorithm (HTL-JA), which can accelerate realization of the attack target compared to the one-stage proximal-policy-optimization-based (PPO) algorithm. Extensive simulations are conducted to show the effectiveness of the injected optimal sequential attack.

\end{abstract}}

\begin{IEEEkeywords}
	False data injection attacks, learning-based methods, attack selection strategy, convergence
\end{IEEEkeywords}
\section{INTRODUCTION}\label{I}
\IEEEPARstart{S}{ecurity} issues are becoming increasingly prominent in networked control systems (NCSs) as network technologies are extensively used to connect physical components within a control loop \cite{zhang2019networked}. In NCSs, false data injection (FDI) --- whereby an adversary injects false data by manipulating sensor readings or communication channels --- is a commonly encountered form of cyber attack \cite{luo2022feedback}. Crucially, through an FDI attack, an adversary can cause significant damage to control components while remaining undetected. For instance, on June 27, 2022, anonymous hacker organization Gonjeshke Darande carried out cyber attacks against Iran's steel industry such that a heavy machine on a billet production line broke down and caused a fire \cite{Gonjeshke2022attack}. As a result, the steel industry had to halt production, leading to lots of economic losses.

\subsection{Motivations}
Considerable efforts have been devoted to studying the effects of potential FDI attacks \cite{liu2011false,mo2010false,sui2020vulnerability,zhu2014resilience,tan2017modeling} and designing the optimal FDI attack strategies \cite{chen2017cyber,li2019optimal,jafari2022optimal}. For instance, Chen \emph{et al.} \cite{chen2017cyber} found an optimal attack strategy to balance the control objective and the detection avoidance objective.
Li \emph{et al.} derived the optimal linear attack vector injected in the sensor readings to degrade the system estimation performance \cite{li2019optimal}. Jafari \emph{et al.} \cite{jafari2022optimal} studied an optimal false data injection attack (OFDIA) on automatic generation control (AGC) in power systems to destroy the frequency stability. 
Most of these works focus on the design of the injected optimal attack signal to meet the given objective function. Besides, there are some researchers aiming at developing the FDI attack selection strategy \cite{wu2018optimal,wu2019optimal,ye2020complexity,luo2022submodularity}. Wu \emph{et al.} solved an optimal switching data injection attack design problem where only one actuator is compromised each time to minimize the quadratic cost function \cite{wu2018optimal}. In \cite{luo2022submodularity}, the adversary with limited capability aims to select a subset of agents and manipulate their local multi-dimensional states to maximize the consensus convergence error by utilizing the submodularity optimization theory. It shows distinct attack effects under different attack selection strategies. 

{\color{black}Note that there exist three interesting problems worthy of further investigation. The first one is to explore the relationship between the injected attack signal and the attack selection strategy. For an adversary, selecting which agent to compromise and how much attack signal to inject are two key tasks. Usually, they are coupled and integrated into the system. 
It is significant to build an analytic expression for both and analyze how the attack selection strategy influences the injected attack signal. With this relationship, it is beneficial to probe the adversary's potential capability and predict its possible behavior.
It is worth noting that few works focus on excavating the analytic relationship between them.  
The second one is to excavate the convergence property of the injected optimal attack signal. It is intriguing and promising to demonstrate the characteristic of the injected attack signal. Once its convergence property is excavated, the adversary can effectively inject attack signal and save unnecessary computing resources to maximize the malicious effects. 
The third one is to tackle the sequential attack selection problem as time varies instead of a fixed compromised subset. It is more practical for an intelligent adversary to maximize the attack effects with time-varying attack selection strategies. For example, in smart grids, many substations can be compromised sequentially, whose combinations are prone to cause severe large-scale blackouts \cite{zhu2014resilience}.
Additionally, from the perspective of the system protection, it is advantageous to analyze the potential system's vulnerability and design resilient algorithms to improve the system's security.}

\subsection{Contributions}
In this paper, we study the co-design problem of optimal sequential FDI attack where the adversary aims to specify how to select the compromised agent and inject the attack signal sequentially. Concretely, we derive the relationship between the injected optimal sequential attack signal and the attack selection strategy. Meanwhile, we desire to seek the potential convergence property of the injected attack signal where the adversary aims to steer the system state value to an expected malicious one in a discrete-time system. Compared to our conference version \cite{luo2023optimal}, we extend the sequential attack signal design problem to the sequential attack selection problem and propose a heuristic two-stage learning-based joint attack algorithm (HTL-JA) to reach optimal performance. Moreover, we significantly enrich the related works, motivation and simulation results.
The main contributions are summarized as follows.

\begin{itemize}
	\item We construct a sequential joint attack design framework where
	      the adversary selects the sequential attack subsets and injects sequential attack signal over sampling times to drive the system state to a desired malicious one.
	\item We derive an analytical closed-form expression between the optimal sequential attack signal and the attack selection strategy, in which they are
	      deeply coupled. Moreover, we theoretically characterize the finite-time inverse convergence of the critical parameters in the obtained optimal sequential attack signal via the discrete-time Lyapunov analysis.
	\item We propose a heuristic two-stage learning-based joint attack algorithm (HTL-JA), which contributes to acquiring the sequential attack subset and speeding up the realization of the attack target.
\end{itemize}
\subsection{Paper Organization}
The rest of the paper is organized as follows. 
Related works are reviewed in Section \ref{related-work}. 
Section \ref{II} introduces the system model and the adversary model, and formulates the FDI attack co-design problem. 
In Section \ref{III}, the optimal sequential attack signal is designed.
Section \ref{attack-selection} proposes a heuristic two-stage learning-based attack selection strategy. Simulation results are presented in Section \ref{IV}. Finally, we conclude our work in Section \ref{V}.\\

\textbf{Notations.} Let $\mathbb{R}$ denote the set of real numbers. For a vector $l_1 \in \mathbb{R}^{p}$, we have $\|l_1\|^{2}_R \triangleq l_1^{\mathrm{T}} R l_1$ with the positive definite weight matrix $R$. We denote $I_n$ and $1_n$ as the $n$-dimensional diagonal unit matrix and column vector with all elements of $1$, respectively. For a matrix $L_1$, we let $L_1^{*}$ denote its Hermitian matrix.
\section{Related Works}\label{related-work}
A great deal of literature on the design of FDI attacks can be roughly divided into two categories, including designing the false data injection (FDI) attack signal \cite{liu2011false,Zhang2020optimal,Lu2022false,luo2022submodularity,luo2022model,kim2014subspace,an2017data,zhao2022data} and the attack selection strategy \cite{ wu2018optimal,wu2019optimal,ye2020complexity,luo2022submodularity}.

\textbf{FDI Attack Signal Design:}
The first fundamental work on launching the FDI attack signal is proposed by Liu \emph{et al.} \cite{liu2011false} in which the adversary could compromise measurements and change the results of state estimation without being detected by the bad measurement detection technique in smart grids. It reveals the potential secure breach of the power system when offline observations and system information are available to the adversary. Note that the design of the FDI attack signal depends on its attack objective and available information about the system model.
In terms of the attack objective, the attack signal can be designed to maximize the remote state estimation error \cite{Lu2022false}, the consensus error \cite{luo2022submodularity}, the tracking error \cite{Mousavinejad2021resilient}, and the quadratic cost function \cite{Zhang2020optimal}, to name a few.
In view of the available information, the attack signal can be divided into two types, including model-based attack signal \cite{wang2021optimal,Lu2022false} and data-driven one \cite{kim2014subspace,an2017data,zhao2022data}. Among them, there are a large number of works on the optimal attack signal design. Nevertheless, few works focus on the property analysis of the optimal attack signal and explore the characteristic of the attack signal. 

\textbf{FDI Attack Selection Strategy:}
In \cite{Pasqualetti2013attack}, Pasqualetti \emph{et al.} first studied the undetectable and unidentifiable FDI attack set where the adversary knows the full information about the system model and compromises sensors and actuators. It shows that the adversary has the ability to manipulate multiple attack objects without being detected.
In the following, we review only works that are most pertinent to ours \cite{wu2018optimal,wu2019optimal,ye2020complexity,luo2022submodularity}. One type of work is to design switching attacks where the adversary can compromise only one agent at a time, which is basically considered as a kind of attack selection strategy. For example, Wu \emph{et al.} \cite{wu2019optimal} formulated an optimal switching attack design problem where the adversary aims to maximize the quadratic cost of states by determining the optimal compromised sensor sets. To relax the limitation on the number of compromised agents at a time, Luo \emph{et al.} \cite{luo2022submodularity} proposed a submodularity-based FDI attack selection scheme where the adversary can manipulate multi-dimensional states for multiple agents. Nevertheless, note that the attack selection strategy is fixed and time-invariant. In practical scenarios, an intelligent adversary has the ability to change the subset of the compromised agents and dynamically adjust its attack selection strategy. For example, the substations can be compromised sequentially, whose combinations can cause severe large-scale outages in smart grids \cite{zhu2014resilience}. 
Hence, from the perspective of the adversary, it is promising and interesting to seek an efficient method to obtain a sequential attack selection strategy where the attack subset varies as the sequential sampling time, which is more practical and has better attack effects than the time-invariant attack selection strategy.

In a nutshell, different from separately handling the design of the attack signal and attack selection scheme,
our work mainly centers on constructing the bridge between the attack signal and the attack selection strategy and tackling a sequential FDI attack co-design problem with these two coupled variables.
\section{PROBLEM FORMULATION}\label{II}

\subsection{System Model and Adversary Model}
Consider a discrete-time dynamical system
\begin{align}\label{eq1}
	x_{k+1}=A_k x_k +B_k u_k,
\end{align}
where $A_k\in \mathbb{R}^{n\times n}$, $B_k \in \mathbb{R}^{n\times m}$ are the system matrices, $x_k\in \mathbb{R}^{n}$ and $u_k\in \mathbb{R}^{m}$ are the system state and system input at time $k$, respectively. We set the linear feedback controller as $u_k=L_k x_k$. Then, we have 
\begin{align}\label{addeq1}
	x_{k+1}=W_k x_k,
\end{align}
with the system matrix $W_k=A_k + B_k L_k$. 

{\color{black}Consider an adversary can compromise system \eqref{eq1} by altering the original control law $u_k$ or deviating the control signals from the true values, thus indirectly manipulating the system states $x_k$. For an adversary, it has the ability to flexibly select which agent to tamper with and design the injected attack signal simultaneously as time $k$ varies.
The dynamic system \eqref{addeq1} under such attack can be remodeled as
\begin{align}\label{eq2}
	x_{k+1}^{a}=W_k x^{a}_k + \Gamma_k \theta_k,
\end{align}
where $\theta_k\in \mathbb{R}$ is called \textit{sequential} attack signal, the \textit{sequential} attack selection strategy $\Gamma_k=[\gamma_k^1,\ldots,\gamma_k^n]^{\mathrm{T}}\in \mathbb{R}^{n}$ with the binary variable $\gamma_k^i=1$ if the $i$-th agent is compromised at time $k$ and $\gamma_k^i=0$ otherwise.
Then, we make the following assumption about the ability of the adversary and the definition of a sequential attack.}
\begin{assumption}\label{ass2}
	The adversary knows the exact knowledge of the system model.
\end{assumption}

Assumption \ref{ass2} is a common and implicit condition for the adversary to inject false data successfully \cite{guo2018worst}.
\begin{definition}
	\textit{(Sequential attack)} An attack is called sequential if it launches attack strategies (selects attack subsets or injects attack signals) as the sampling time $k$ varies.
\end{definition}


{\color{black}\subsection{Problem Formulation}
In this work, we consider that the adversary's objective is to steer the system state to the expected malicious one as closely as possible in finite time by injecting the false data $\Gamma_k \theta_k$. The sum of the state error is characterized by $J_1$, i.e.,
\vspace{-0.2cm}
\begin{align*}
    J_1=\sum_{k=1}^{N} \left(\|x^a_k-x^*\|^2_{P_k}\right)+ \|x^{a}_{N+1}-x^*\|^2_H,
\end{align*}
where $N$ is the given upper bound of finite-time iteration, $x^{*}$ is the expected malicious state predefined by the adversary, and $P_k$ and $H$ are the positive definite weight matrices.

We also consider that the adversary desires to save the attack energy. The energy of injected false data is denoted as $J_2$, i.e.,
\vspace{-0.1cm}
\begin{align*}
    J_2=\sum_{k=0}^{N}\left(\|\Gamma_k \theta_k\|^2_{Q_k}\right),
\end{align*}
where $Q_k$ is the positive definite weight matrix.

Therefore, the total goal of the adversary is to reduce both the state error between the true system state and the expected malicious one and the consumed attack energy as much as possible. In $N$ iterations, the injected false data includes the \textit{sequential} attack signal $\theta \triangleq \{\theta_0,\theta_1,\ldots,\theta_N\}$ and \textit{sequential} attack selection strategy $\Gamma \triangleq \{ \Gamma_0, \Gamma_1, \ldots, \Gamma_N\}$. Under the injected false data, the sum of the state error and the consumed attack energy is expected to be minimized. To this end, constrained by the intrinsic system dynamic model \eqref{eq2}, we construct the following optimization problem $\mathcal{P}_0$.
\begin{align}\label{problem_0}
	\mathbf{\mathcal{P}_0}: \quad & \mathrm{min}_{\{\theta,\Gamma\}} ~J= J_1 + J_2 \\
	                              & \mathrm{s.t.}~ x_{k+1}^{a}=W_k x^{a}_k + \Gamma_k \theta_k. \nonumber
\end{align}

\subsection{Problem Decomposition}
The challenges of directly solving problem $\mathcal{P}_0$ result from the nonlinearity and non-convexity of the objective function $J$ with respect to two closely coupled optimization variables $\Gamma_k$ and $\theta_k$.
Furthermore, it is difficult to directly obtain the gradients of the objective function for variables $\theta$ and $\Gamma$ to solve problem $\mathcal{P}_0$.
If we can explore the relationship between the attack signal $\theta_k$ and the attack selection strategy $\Gamma_k$ and derive an analytical closed-form relation, it is vital to simplify the solution of problem $\mathcal{P}_0$ and reduce the difficulty of solving problem $\mathcal{P}_0$. Concretely, we could first obtain the optimal attack signal when the attack selection strategy is given and known. Then we explore the feasible attack selection strategy based on the optimal attack signal in which there exists the relationship between the attack signal and the attack selection strategy. 
Thus, we decompose $\mathcal{P}_0$ into the following two sub-problems, i.e., problem $\mathcal{P}_1$ and $\mathcal{P}_2$.
\vspace{-0.1cm}
\begin{align}\label{problem_1}
	\mathbf{\mathcal{P}_1}: \quad & \mathrm{min}_{\{\theta_0,\theta_1,\ldots,\theta_N\}} ~J= J_1 + J_2 \\
	                              & \mathrm{s.t.}~ x_{k+1}^{a}=W_k x^{a}_k + \Gamma_k \theta_k, \nonumber\\
                               & \quad ~~ \Gamma=\Gamma^{\#},\nonumber
\end{align}
where the attack selection strategy $\Gamma^{\#}$ is given and known. 
\vspace{-0.1cm}
\begin{flalign}\label{problem_2}
	\mathbf{\mathcal{P}_2}: \quad & \mathrm{min}_{\{\Gamma_0,\Gamma_1,\ldots,\Gamma_N\}} ~J= J_1 + J_2          \\
	                              & \mathrm{s.t.}~ x_{k+1}^{a}=W_k x^{a}_k + \Gamma_k \theta_k, \nonumber       \\
	                              & \quad ~~\theta_k = \theta_k^{\#} \nonumber,
\end{flalign}
where $\theta_k^{\#}$ is obtained by solving problem $\mathcal{P}_1$. The two sub-problems after decomposition are equivalent to the original problem since the multivariate optimization problem can be reduced to multiple univariate optimization problems if the closed-form analytical relationships among them are known. 

To address problem $\mathcal{P}_0$, we first focus on problem $\mathcal{P}_1$. In problem $\mathcal{P}_1$, we mainly analyze the relationship between the injected attack signal $\theta$ and the attack selection strategy $\Gamma$ based on dynamic programming. Later, in Section \ref{attack-selection}, we will deal with $\Gamma$ with the heuristic learning-based algorithm under the obtained optimal \textit{sequential} attack signal $\theta$.}

\section{Optimal Sequential Attack Signal Scheme}\label{III}
In this section, we solve problem $\mathcal{P}_1$ and derive the optimal sequential attack signal based on dynamic programming. Then, we excavate its critical parameters' property. 

\subsection{Optimal Attack Signal Design}
{\color{black}Before demonstrating the sequential attack signal scheme, we introduce the notion of dynamic programming. Based on the Bellman principle of optimality, the key idea of dynamic programming is to transform the multi-stage decision problem to multiple single-stage decision problems. Concretely, if a multi-stage decision process satisfies the principle of optimality, it means that the decision sequence of the subsequent stages must be optimal for a state caused by the previous decision regardless of the initial state and the initial decision. } 

Motivated by the above observations, we aim to deal with problem $\mathcal{P}_1$ with dynamic programming. The schematic of the optimal sequential attack signal design is shown in Fig. \ref{schematic}, where the multi-time attack signal injection problem is transformed into multiple single-time attack signal injection problems. Specifically, given the attack selection strategy $\Gamma_k$, the critical parameters $F_k$ and $M_k$ can be obtained backward offline based on \eqref{F_k} and \eqref{M_k}. 
Then, the solution of problem $\mathcal{P}_1$, i.e., the optimal sequential attack signal $\theta_k$ is derived with the obtained $F_k$ and $M_k$. The detailed solution is shown in the following theorem.

\begin{figure}[t]
	\centering
	\includegraphics[width=0.45\textwidth]{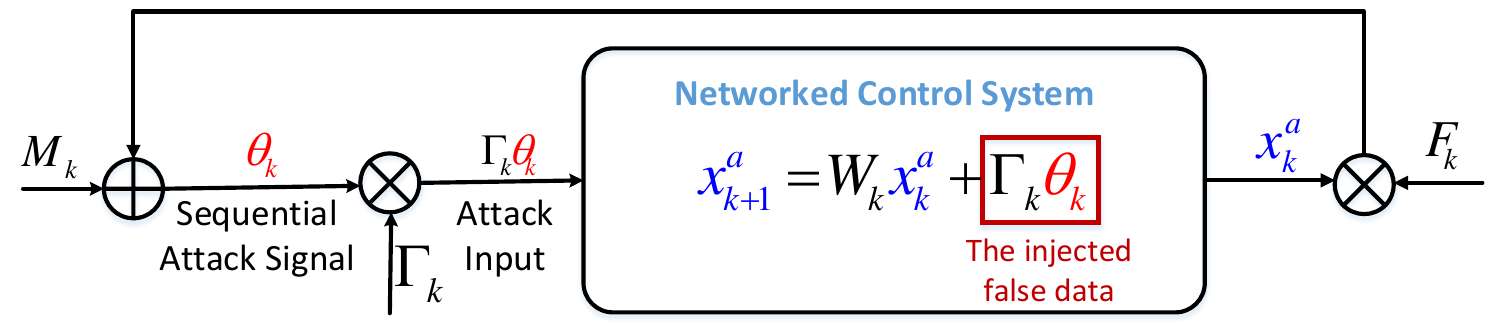}
	\caption{The schematic of the optimal sequential attack signal design}
	\label{schematic}
	\vspace*{-10pt}
\end{figure}

\begin{theorem}\label{th:optimal}
	(Optimal Sequential Attack Signal) The optimal sequential attack signal $\theta_{k}$ for $k=0,1,\cdots,N$, that minimizes $J$ in \eqref{problem_1} is
	\begin{align}\label{theta_without}
		\theta_k= F_k x^{a}_k + M_k,
	\end{align}
	where the critical parameter
	\begin{align}\label{F_k}
		F_k=-R_k^{-1}\Gamma_k^{\mathrm{T}} K_{k+1} W_k,
	\end{align}
	with $R_k=\Gamma_k^{\mathrm{T}}(Q_k+K_{k+1})\Gamma_k$ and the intermediate variable
	\begin{align}\label{K}
		K_k & = P_k + W_k^{\mathrm{T}} K_{k+1} W_k + 2 W_k^{\mathrm{T}} K_{k+1} \Gamma_k F_k \\ \nonumber
		    & ~~~+ F_k^{\mathrm{T}} \Gamma_k^{\mathrm{T}} (Q_k + K_{k+1} ) \Gamma_k F_k,
	\end{align}and another critical parameter $M_k=$
	{\small{
		\begin{align}\label{M_k}
			\begin{split}
				\left\{
				\begin{array}{l}
					R_k^{-1} \Gamma_k^{\mathrm{T}} K_{k+1} x^{*}, k=N, \\
					R_k^{-1} \Gamma_k^{\mathrm{T}} K_{k+1} (P_{k+1} x^{*} - W_{k+1}^{\mathrm{T}} K_{k+2} \Gamma_{k+1} M_{k+1}), k\neq N.
				\end{array}
				\right.
			\end{split}
		\end{align}}}
\end{theorem}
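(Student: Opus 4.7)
The plan is to prove Theorem~\ref{th:optimal} by backward induction using the dynamic programming (Bellman) principle applied to problem $\mathcal{P}_1$. I would first introduce the cost-to-go
\begin{align*}
V_k(x^a_k) = \min_{\theta_k,\ldots,\theta_N} \left\{ \sum_{j=k}^{N} \bigl(\|x^a_j - x^*\|_{P_j}^2 + \|\Gamma_j \theta_j\|_{Q_j}^2\bigr) + \|x^a_{N+1} - x^*\|_H^2 \right\}
\end{align*}
subject to the attacked dynamics \eqref{eq2}, and postulate the quadratic-plus-linear ansatz $V_k(x^a_k) = (x^a_k)^{\mathrm{T}} K_k x^a_k - 2 (x^a_k)^{\mathrm{T}} b_k + c_k$, where $K_k$ is intended to match \eqref{K}, $b_k$ is an auxiliary vector that tracks the linear-in-$x^*$ drift, and $c_k$ is irrelevant to the minimization. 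The terminal condition $V_{N+1}(x) = \|x - x^*\|_H^2$ pins down $K_{N+1} = H$, $b_{N+1} = K_{N+1} x^*$, and $c_{N+1} = (x^*)^{\mathrm{T}} H x^*$.

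For the induction step at stage $k$, I would substitute the ansatz into the Bellman recursion
\begin{align*}
V_k(x^a_k) = \|x^a_k - x^*\|_{P_k}^2 + \min_{\theta_k} \Bigl\{ \|\Gamma_k \theta_k\|_{Q_k}^2 + V_{k+1}\bigl(W_k x^a_k + \Gamma_k \theta_k\bigr) \Bigr\}.
\end{align*}
Since $\theta_k$ is scalar and $R_k = \Gamma_k^{\mathrm{T}}(Q_k + K_{k+1})\Gamma_k > 0$ whenever $\Gamma_k \neq 0$, the bracketed expression is strictly convex in $\theta_k$, and the first-order condition $R_k \theta_k + \Gamma_k^{\mathrm{T}} K_{k+1} W_k x^a_k - \Gamma_k^{\mathrm{T}} b_{k+1} = 0$ immediately yields the unique affine optimizer $\theta_k = F_k x^a_k + M_k$ with $F_k$ exactly as in \eqref{F_k} and $M_k = R_k^{-1}\Gamma_k^{\mathrm{T}} b_{k+1}$, matching \eqref{theta_without}.

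Plugging this optimizer back into the Bellman equation and regrouping by powers of $x^a_k$ must reproduce the same ansatz at stage $k$. Matching the quadratic coefficient, together with the identity $F_k^{\mathrm{T}} R_k F_k = F_k^{\mathrm{T}}\Gamma_k^{\mathrm{T}}(Q_k+K_{k+1})\Gamma_k F_k$, should deliver the Riccati-type recursion \eqref{K} for $K_k$; matching the linear coefficient should deliver the auxiliary recursion $b_k = P_k x^* + W_k^{\mathrm{T}}(b_{k+1} - K_{k+1}\Gamma_k M_k)$. At the terminal step $k=N$, the identity $b_{N+1}=K_{N+1}x^*$ reduces $M_N = R_N^{-1}\Gamma_N^{\mathrm{T}} b_{N+1}$ to the top branch of \eqref{M_k}, while for $k<N$ one backward step of the $b$-recursion substituted into $M_k = R_k^{-1}\Gamma_k^{\mathrm{T}} b_{k+1}$ should reproduce the compact bottom branch of \eqref{M_k}.

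The main obstacle I anticipate is algebraic bookkeeping rather than anything conceptual. The subtle point is precisely the last identification: the generic linear recursion for $b_{k+1}$ has to collapse into the product form $K_{k+1}\bigl(P_{k+1}x^* - W_{k+1}^{\mathrm{T}} K_{k+2}\Gamma_{k+1} M_{k+1}\bigr)$ prescribed by \eqref{M_k}, which requires carefully unwinding the $b$-recursion back to the terminal identity $b_{N+1} = K_{N+1} x^*$ and using \eqref{K} to factor $K_{k+1}$ through $b_{k+1}$. Two side checks must also be carried along the induction: (i) $K_{k+1}$ remains positive semidefinite so that $R_k > 0$ whenever $\Gamma_k \neq 0$, and (ii) the degenerate stages with $\Gamma_k = 0$ are handled separately, where $\Gamma_k\theta_k \equiv 0$ makes the stage minimization vacuous and the formulas for $F_k$ and $M_k$ collapse consistently.
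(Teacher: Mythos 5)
Your overall route is the same as the paper's: backward induction on the Bellman equation with a quadratic value-function ansatz, a first-order condition in the scalar $\theta_k$, and coefficient matching to obtain the Riccati-type recursion \eqref{K}. Your bookkeeping is in fact cleaner than the paper's, because you carry the linear term $-2x^{\mathrm{T}}b_k$ explicitly instead of burying all state-dependent non-quadratic terms in a ``constant'' $G_k$ that the paper then differentiates as if it were constant. Your recursion $b_k = P_k x^* + W_k^{\mathrm{T}}(b_{k+1}-K_{k+1}\Gamma_k M_k)$ with $b_{N+1}=K_{N+1}x^*$, and the identification $M_k = R_k^{-1}\Gamma_k^{\mathrm{T}}b_{k+1}$, are both correct; the closed-loop correction $F_k^{\mathrm{T}}\Gamma_k^{\mathrm{T}}(\cdot)$ drops out of the $b$-recursion because $\Gamma_k^{\mathrm{T}}(b_{k+1}-K_{k+1}\Gamma_k M_k)=\Gamma_k^{\mathrm{T}}Q_k\Gamma_k M_k$.

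The genuine gap is the step you defer to ``algebraic bookkeeping'': the claim that $M_k=R_k^{-1}\Gamma_k^{\mathrm{T}}b_{k+1}$ collapses to the bottom branch of \eqref{M_k}. It does not. Substituting your (correct) recursion gives $M_k = R_k^{-1}\Gamma_k^{\mathrm{T}}\bigl[P_{k+1}x^* + W_{k+1}^{\mathrm{T}}b_{k+2} - W_{k+1}^{\mathrm{T}}K_{k+2}\Gamma_{k+1}M_{k+1}\bigr]$, which differs from \eqref{M_k} both by the absence of the prefactor $K_{k+1}$ and by the extra term $W_{k+1}^{\mathrm{T}}b_{k+2}$. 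A scalar sanity check ($n=1$, $N=1$, $W=w$, $P=Q=H=\Gamma=1$) gives, by direct minimization of $J$, $M_0=(1+w/2)x^*/(2+w^2/2)$, matching your formula, whereas \eqref{M_k} yields $(1+w^2/2)(1-w/2)x^*/(2+w^2/2)$; these agree only for special $w$. The discrepancy traces to the paper's own appendix: in \eqref{theta-N-1} the term $W_N^{\mathrm{T}}K_{N+1}\Gamma_N M_N$ is an $n$-vector added to scalars, and in the line defining $M_{N-1}$ the factor $\Gamma_{N-1}^{\mathrm{T}}K_N$ is distributed over both terms although only $P_Nx^*$ carried it. So the obstruction you anticipated is real and cannot be overcome by unwinding the recursion: either you report $M_k=R_k^{-1}\Gamma_k^{\mathrm{T}}b_{k+1}$ with your $b$-recursion as the closed form, or you reproduce \eqref{M_k} as stated, but not both. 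Your side check (ii) is also substantive rather than cosmetic: when $\Gamma_k=0$, $R_k$ is not invertible and \eqref{F_k}--\eqref{M_k} are undefined, a case neither the theorem statement nor the paper's proof addresses.
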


\begin{proof}
The proof can be completed by solving the Bellman equation backward from time $N+1$ of termination, shown in Appendix \ref{APP-A}.
\end{proof}

\begin{figure}[t]
	\centering
	\includegraphics[width=0.45\textwidth]{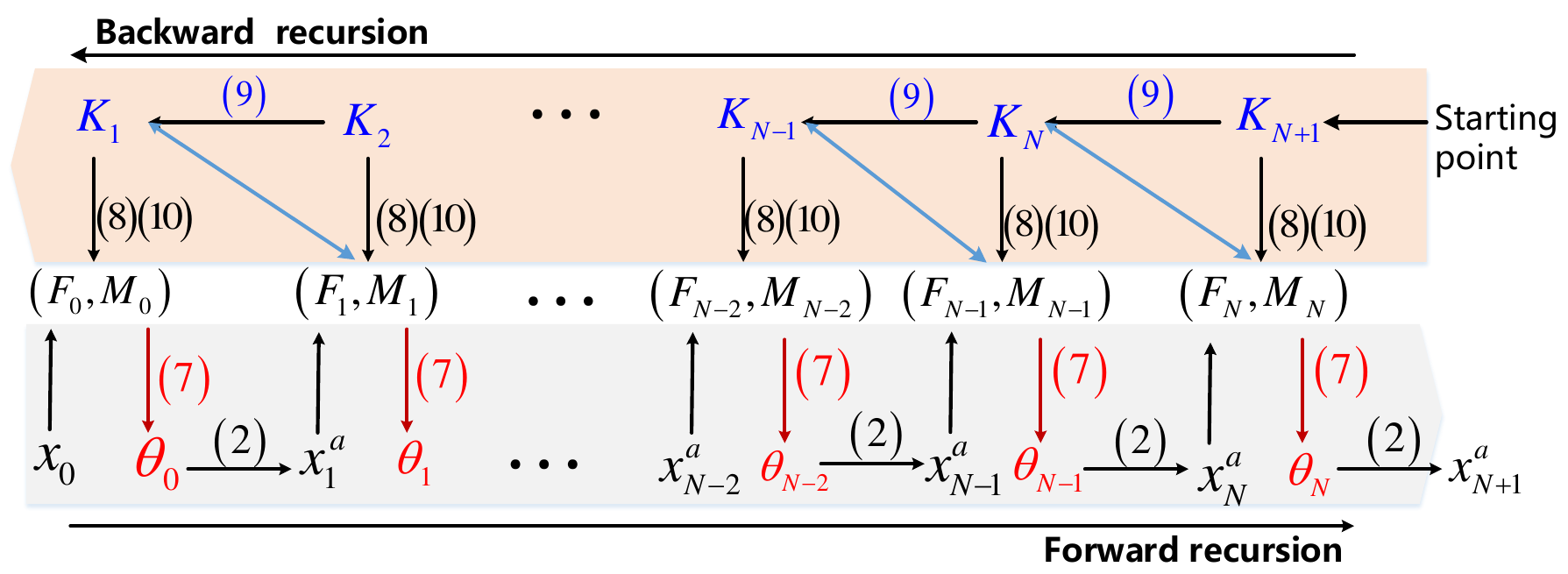}
	\caption{The recursion flow of $F_k$, $K_k$, $M_k$ and $\theta_k$}
        \label{fig:flow}
	\vspace*{-10pt}
\end{figure}
Theorem \ref{th:optimal} reveals the strongly coupled relationship between the optimal sequential attack signal and the attack selection strategy. 
Especially, the optimal attack signal $\theta_k$ at time $k$ is the function of the system state $x_k^{a}$. 
Besides, it is related to the system structure $W_k$, the expected malicious state $x^{*}$, and the initial states $x_0$. 
In other words, once the adversary knows the initial state $x_0$ and the system structure $W_k$, the optimal sequential attack signal $\theta_k$ can be designed after the adversary determines the expected malicious state $x^{*}$, the attack selection strategy $\Gamma_k$, and weight matrices $P_k$, $Q_k$ and $H$. 
As shown in Fig. \ref{fig:flow}, with the initial matrix $K_{N+1}$, $F_k$, $K_k$ and $M_k$ are derived backward based on \eqref{F_k}, \eqref{K} and \eqref{M_k}, respectively. 
Then, with the known initial states $x_0$ and \eqref{theta_without}, the adversary can directly inject optimal sequential attack signal $\theta_k$ along the forward iteration timeline.

\subsection{Property Analysis}
In this part, we demonstrate the inverse convergence of the critical parameter matrix $K_k$ in \eqref{K} and vector $F_k$ in \eqref{F_k}, respectively. Since $K_k$ and $F_k$ are derived backward, its inverse convergence is defined as follows.
\begin{definition}\label{d2}
	\textit{(Inverse Convergence)} Matrix/Vector/Point convergence is called inverse convergence if the matrix/vector/point is derived backward and converges along the reverse order of iteration time.
\end{definition}

Based on Definition \ref{d2}, we find that the sequential \{$K_{N}$, $K_{N-1}$, $K_{N-2}$, $\ldots$, $K_{2}$, $K_{1}$\} and \{$F_{N}$, $F_{N-1}$, $F_{N-2}$, $\ldots$, $F_{1}$, $F_{0}$\} converge forward, which are also called inverse convergence of $K_k$ and $F_k$. With this property, it is possible to quickly obtain the steady-state parameters $K_k$ and $F_k$. In other words, only a small number of iteration times $k$ are required to derive $F_k$ and $K_k$ backward regardless of the finite-time $N$. Based on these few backward recursions, the optimal sequential attack signal can be directly designed.

In what follows, we first analyze the symmetry and positive definiteness of $K_k$, and the system's finite-time stability, which is beneficial to proving its inverse convergence.

\begin{lemma}[Symmetry and positive definiteness of $K_k$]\label{l1}
	The matrix $K_k$ in \eqref{K} is a positive definite Hermitian matrix for $k=0,1,\ldots,N$, i.e., $K_k=K^{*}_{k} \succ 0$.
\end{lemma}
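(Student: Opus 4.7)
The plan is to proceed by backward induction on $k$ from $k=N+1$ down to $k=0$, taking as base case the terminal initialization $K_{N+1}=H$, which is positive definite by the problem setup. Assume inductively that $K_{k+1}=K_{k+1}^{*}\succ 0$; I would then show $K_k$ inherits symmetry and positive definiteness.

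First, I would simplify \eqref{K} by substituting $F_k=-R_k^{-1}\Gamma_k^{\mathrm{T}} K_{k+1} W_k$ from \eqref{F_k}. The cross term becomes $2W_k^{\mathrm{T}} K_{k+1}\Gamma_k F_k = -2W_k^{\mathrm{T}} K_{k+1}\Gamma_k R_k^{-1}\Gamma_k^{\mathrm{T}} K_{k+1} W_k$, and the quadratic term collapses since $F_k^{\mathrm{T}}\Gamma_k^{\mathrm{T}}(Q_k+K_{k+1})\Gamma_k F_k = F_k^{\mathrm{T}} R_k F_k = W_k^{\mathrm{T}} K_{k+1}\Gamma_k R_k^{-1}\Gamma_k^{\mathrm{T}} K_{k+1} W_k$. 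Combining yields the reduced Riccati-type form
\begin{equation*}
K_k = P_k + W_k^{\mathrm{T}}\bigl(K_{k+1} - K_{k+1}\Gamma_k R_k^{-1}\Gamma_k^{\mathrm{T}} K_{k+1}\bigr)W_k.
\end{equation*}
Hermitian symmetry of $K_k$ is then immediate term by term: $P_k$ is symmetric by assumption, $W_k^{\mathrm{T}} K_{k+1} W_k$ inherits symmetry from $K_{k+1}$, and the subtracted piece is of the form $vv^{\mathrm{T}}/R_k$ with $v=K_{k+1}\Gamma_k$ and $R_k$ a positive scalar.

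For positive definiteness, the crux is to verify $K_{k+1}-K_{k+1}\Gamma_k R_k^{-1}\Gamma_k^{\mathrm{T}} K_{k+1}\succeq 0$. Note $R_k=\Gamma_k^{\mathrm{T}} Q_k\Gamma_k+\Gamma_k^{\mathrm{T}} K_{k+1}\Gamma_k$ is a strictly positive scalar whenever $\Gamma_k\neq 0$, since $Q_k\succ 0$ and $K_{k+1}\succ 0$. For any $z\in\mathbb{R}^n$, the quadratic form equals $z^{\mathrm{T}} K_{k+1} z - (z^{\mathrm{T}} K_{k+1}\Gamma_k)^2/R_k$. Cauchy--Schwarz in the inner product induced by $K_{k+1}$ gives $(z^{\mathrm{T}} K_{k+1}\Gamma_k)^2\leq (z^{\mathrm{T}} K_{k+1} z)(\Gamma_k^{\mathrm{T}} K_{k+1}\Gamma_k)$, and the ratio $\Gamma_k^{\mathrm{T}} K_{k+1}\Gamma_k/R_k\leq 1$ thanks to the additional $\Gamma_k^{\mathrm{T}} Q_k\Gamma_k\geq 0$ in the denominator. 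Equivalently, this is the Schur complement of $R_k$ in the block PSD matrix with diagonal $(K_{k+1},R_k)$ and off-diagonal $K_{k+1}\Gamma_k$. Sandwiching by $W_k$ preserves positive semidefiniteness, and adding the strictly positive definite $P_k$ yields $K_k\succ 0$, closing the induction.

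Two last items: the boundary case $\Gamma_k=0$ needs separate handling, but this is trivial because the last two terms of \eqref{K} vanish outright, leaving $K_k=P_k+W_k^{\mathrm{T}} K_{k+1} W_k\succ 0$. The main obstacle is the PSD-ness of the middle Riccati block; the key subtlety is that it relies crucially on $Q_k\succ 0$ (the attack-energy penalty) to ensure $R_k>\Gamma_k^{\mathrm{T}} K_{k+1}\Gamma_k$, without which Cauchy--Schwarz could be tight and the argument would only give positive semidefiniteness rather than strict positive definiteness of $K_k$.
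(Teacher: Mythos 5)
Your proof is correct, and its skeleton matches the paper's: backward induction on $k$, followed by collapsing the cross term $2W_k^{\mathrm{T}}K_{k+1}\Gamma_k F_k$ and the quadratic term $F_k^{\mathrm{T}}R_kF_k$ into $-R_k^{-1}W_k^{\mathrm{T}}K_{k+1}\Gamma_k\Gamma_k^{\mathrm{T}}K_{k+1}W_k$ (this is exactly the paper's equation \eqref{F_0} and Corollary \ref{c1}). Where you genuinely diverge is the positive-definiteness step. The paper tries to show the stronger claim that $W_0^{\mathrm{T}}K_1W_0-R_0^{-1}W_0^{\mathrm{T}}K_1\Gamma_0\Gamma_0^{\mathrm{T}}K_1W_0$ is itself positive definite, by splitting it as $Z_0^{\mathrm{T}}K_1Z_0$ plus a remainder and asserting $Z_0v_0\neq 0$ for every nonzero $v_0$ --- i.e., that $Z_0$ is nonsingular --- which is never justified; it also quietly restricts to $P_k=Q_k=H=I$. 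You instead prove only positive semidefiniteness of the Riccati block $K_{k+1}-K_{k+1}\Gamma_kR_k^{-1}\Gamma_k^{\mathrm{T}}K_{k+1}$ via Cauchy--Schwarz in the $K_{k+1}$-inner product (equivalently a Schur complement), and let $P_k\succ 0$ supply strictness. This is cleaner, works for general positive definite weights, avoids the unproven invertibility of $Z_0$, and handles the degenerate $\Gamma_k=0$ case that the paper ignores.

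One small inconsistency in your closing remark: you say the argument ``relies crucially on $Q_k\succ 0$'' and that otherwise one would only get positive semidefiniteness of $K_k$. That contradicts your own proof --- even with $Q_k=0$ (so $R_k=\Gamma_k^{\mathrm{T}}K_{k+1}\Gamma_k$ and Cauchy--Schwarz possibly tight), the middle block remains PSD, and $K_k=P_k+W_k^{\mathrm{T}}(\mathrm{PSD})W_k\succ 0$ still follows from $P_k\succ 0$ alone. The role of $Q_k\succ 0$ is only to make the inequality in the Riccati block strict off the null direction, not to rescue strict positive definiteness of $K_k$.
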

\begin{proof}
 Please see Appendix \ref{B}.
\end{proof}
\begin{corollary}\label{c1}
	$K_k$ in \eqref{K} can be simplified as
	\begin{align*}
		K_k=P_k+ W_k^{\mathrm{T}} K_{k+1} W_k -R_k^{-1} W_k^{\mathrm{T}} K_{k+1} \Gamma_k \Gamma_k^{\mathrm{T}} K_{k+1} W_k.
	\end{align*}
\end{corollary}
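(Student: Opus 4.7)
The plan is to verify the corollary by direct substitution of the closed-form expression \eqref{F_k} for $F_k$ into the recursion \eqref{K} for $K_k$, and then collect terms. The key algebraic convenience is that $R_k = \Gamma_k^{\mathrm{T}}(Q_k + K_{k+1})\Gamma_k$ is a scalar (since $\Gamma_k \in \mathbb{R}^{n}$ is a column vector), so $R_k^{-1}$ commutes freely with every matrix factor and parentheses may be regrouped at will.

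The steps I would follow are: first, invoke Lemma \ref{l1} so that $K_{k+1}^{\mathrm{T}} = K_{k+1}$ can be used when transposing $F_k$, which gives $F_k^{\mathrm{T}} = -W_k^{\mathrm{T}} K_{k+1} \Gamma_k R_k^{-1}$; second, substitute $F_k = -R_k^{-1} \Gamma_k^{\mathrm{T}} K_{k+1} W_k$ separately into the cross term $2 W_k^{\mathrm{T}} K_{k+1} \Gamma_k F_k$ and the quadratic term $F_k^{\mathrm{T}} \Gamma_k^{\mathrm{T}}(Q_k + K_{k+1})\Gamma_k F_k$; third, recognize that the inner factor $\Gamma_k^{\mathrm{T}}(Q_k + K_{k+1})\Gamma_k$ in the quadratic term is exactly $R_k$, so one $R_k^{-1}$ cancels and leaves a single scalar $R_k^{-1}$ outside.

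The cross term then contributes $-2 R_k^{-1} W_k^{\mathrm{T}} K_{k+1} \Gamma_k \Gamma_k^{\mathrm{T}} K_{k+1} W_k$, while the quadratic term contributes the same expression with coefficient $+1$, so their sum equals $-R_k^{-1} W_k^{\mathrm{T}} K_{k+1} \Gamma_k \Gamma_k^{\mathrm{T}} K_{k+1} W_k$, which is precisely the last term of the claimed identity. Adding the untouched $P_k + W_k^{\mathrm{T}} K_{k+1} W_k$ recovers the corollary.

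This derivation is essentially a mechanical simplification and I expect no substantive obstacle. The only point meriting care is that the manipulation of $F_k^{\mathrm{T}}$ requires $K_{k+1}$ to be symmetric, and this is precisely what Lemma \ref{l1} provides; no additional machinery beyond the preceding results is needed.
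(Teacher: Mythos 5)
Your proposal is correct and follows essentially the same route as the paper: the paper proves the corollary by citing \eqref{K} together with the computation in \eqref{F_0} (from the proof of Lemma \ref{l1}), which is precisely the substitution of $F_k=-R_k^{-1}\Gamma_k^{\mathrm{T}}K_{k+1}W_k$ into the cross and quadratic terms, the cancellation of one $R_k^{-1}$ against the inner factor $R_k$, and the use of the scalarity of $R_k$ and the symmetry of $K_{k+1}$ to obtain $-R_k^{-1}W_k^{\mathrm{T}}K_{k+1}\Gamma_k\Gamma_k^{\mathrm{T}}K_{k+1}W_k$. No gap.
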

Combined with \eqref{K} and \eqref{F_0}, the proof is completed.

\begin{lemma}[Finite-time stability]\label{l3}
	Consider a discrete-time system with a corresponding positive definite matrix-valued Lyapunov function $\tilde{V}: \mathbb{R}^{n\times n} \rightarrow \mathbb{R}$ and let $\tilde{V}_k=\tilde{V}(K_k-K^{\star})$. Let $\alpha$ and $\epsilon$ be a constant in the open interval $(0,1)$. Let $\tilde{V}_N>0$ be the finite initial value of the Lyapunov function with respect to $K_k$. Denote $.\varphi_k \triangleq \varphi(\tilde{V}_k^{1-\alpha})$ where $\varphi: \mathbb{R}^{+}\rightarrow \mathbb{R}^{+}$ is a class-$\mathcal{K}$ function of $\tilde{V}_k^{1-\alpha}$ that satisfies
	\begin{align}\label{range}
		\frac{\varphi_k}{\varphi_N} \geq 1-\epsilon \quad \text{for} \quad \tilde{V}_k^{1-\alpha}\in (\tilde{V}_N^{1-\alpha}-\chi, \tilde{V}_N^{1-\alpha})
	\end{align}
	for some finite positive constant $\chi < \tilde{V}_N^{1-\alpha}$.
	Then, if $\tilde{V}_k$ satisfies the relation
	\begin{align}\label{eq:23}
		\tilde{V}_{k-1}-\tilde{V}_{k}=-\varphi_k \tilde{V}_k^{\alpha},
	\end{align}
	matrix $K_k$ has the steady state and converges to $K^{\star}$ for $0\leq k< \xi^{\star}$ where the positive integer $\xi^{\star}$ satisfies \eqref{ep}.
\end{lemma}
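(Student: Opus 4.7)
The plan is to translate the familiar continuous-time finite-time Lyapunov argument into the backward discrete-time setting by extracting a linear recursion for $\tilde{V}_k^{1-\alpha}$. First, I would rewrite the hypothesis $\tilde{V}_{k-1}-\tilde{V}_k=-\varphi_k\tilde{V}_k^{\alpha}$ as the forward-in-iteration update $\tilde{V}_k-\tilde{V}_{k-1}=\varphi_k\tilde{V}_k^{\alpha}>0$, which makes explicit that $\tilde{V}_k$ strictly decreases as $k$ decreases from $N$ toward $0$, matching the inverse-convergence direction in Definition \ref{d2} and the positive-definiteness ensured by Lemma \ref{l1}.

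Next I would apply the mean value theorem to the concave map $f(x)=x^{1-\alpha}$ on the interval $[\tilde{V}_{k-1},\tilde{V}_k]$. Because $f'(x)=(1-\alpha)x^{-\alpha}$ is decreasing, evaluating at the right endpoint yields the one-step inequality
\[
\tilde{V}_{k-1}^{1-\alpha}\ \leq\ \tilde{V}_k^{1-\alpha}-(1-\alpha)\tilde{V}_k^{-\alpha}\bigl(\tilde{V}_k-\tilde{V}_{k-1}\bigr)\ =\ \tilde{V}_k^{1-\alpha}-(1-\alpha)\varphi_k.
\]
Substituting the standing hypothesis $\varphi_k\geq(1-\epsilon)\varphi_N$, which holds whenever $\tilde{V}_k^{1-\alpha}$ lies in $(\tilde{V}_N^{1-\alpha}-\chi,\tilde{V}_N^{1-\alpha})$, converts the recursion into a clean \emph{linear} backward decrement
\[
\tilde{V}_{k-1}^{1-\alpha}\ \leq\ \tilde{V}_k^{1-\alpha}-(1-\alpha)(1-\epsilon)\varphi_N.
\]

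Telescoping this inequality from $k=N$ down through $j$ consecutive steps gives $\tilde{V}_{N-j}^{1-\alpha}\leq\tilde{V}_N^{1-\alpha}-j(1-\alpha)(1-\epsilon)\varphi_N$. Because $\tilde{V}_k\geq 0$ by positive definiteness of the Lyapunov function, once the right-hand side drops to zero $\tilde{V}_{N-j}$ is forced to vanish, and hence $K_{N-j}=K^{\star}$. Taking the smallest such $j$ and setting $\xi^{\star}\triangleq N-\bigl\lceil\tilde{V}_N^{1-\alpha}/\bigl((1-\alpha)(1-\epsilon)\varphi_N\bigr)\bigr\rceil+1$ then propagates $K_k=K^{\star}$ for every $0\leq k<\xi^{\star}$, recovering the formula in \eqref{ep}; the monotonicity of the update shows that once the steady state is reached it is preserved under further backward iteration, so the claim holds uniformly on the interval.

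The main obstacle will be rigorously maintaining the $\varphi_k\geq(1-\epsilon)\varphi_N$ bound across every step of the telescoping sum, since the hypothesis only supplies this bound while $\tilde{V}_k^{1-\alpha}$ sits inside the window $(\tilde{V}_N^{1-\alpha}-\chi,\tilde{V}_N^{1-\alpha})$ and $\tilde{V}_k^{1-\alpha}$ decreases monotonically out of that window. I see two acceptable resolutions: (i) interpret $\chi$ as chosen large enough that a single window already drives $\tilde{V}_k^{1-\alpha}$ below the threshold, which is the cleanest reading of the statement and yields \eqref{ep} directly; or (ii) cascade the argument over successive shrinking windows, each time invoking the class-$\mathcal{K}$ property of $\varphi$ to obtain a fresh lower bound and combining the per-window step counts into a finite total. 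The remaining bookkeeping — converting the real-valued bound into the integer ceiling in \eqref{ep} and aligning the strict inequality $0\leq k<\xi^{\star}$ with the backward index convention — is routine.
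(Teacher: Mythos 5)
Your route is genuinely different from the paper's. The paper normalizes $\tilde{V}_k=\beta_k(\varphi_N)^{\frac{1}{1-\alpha}}$ and tracks the exact scalar recursion $\beta_{k-1}=\beta_k-a_k\beta_k^{\alpha}$ with $a_k=\varphi_k/\varphi_N\geq 1-\epsilon$; the decisive observation is that non-negativity of $\tilde{V}_{k-1}$ forces $\beta_{k-1}=0$ the moment $\beta_k^{1-\alpha}\leq 1-\epsilon$, because then $(1-\epsilon)\beta_k^{\alpha}\geq\beta_k$ and a \emph{single} backward step annihilates the Lyapunov function. That one-step kill at the threshold is exactly what \eqref{ep} encodes, and it is why $\xi^{\star}$ is characterized implicitly by $\beta_{\xi^{\star}}^{1-\alpha}=1-\epsilon$ rather than by a telescoped step count. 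Your concavity bound $\tilde{V}_{k-1}^{1-\alpha}\leq\tilde{V}_k^{1-\alpha}-(1-\alpha)\varphi_k$ is correct and does yield finite-time convergence via a linear decrement of $\tilde{V}^{1-\alpha}$, but the ceiling formula you extract for $\xi^{\star}$ is a different quantity and does not recover \eqref{ep}; to match the statement as written you would still need the paper's threshold argument.

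The more serious issue is the window gap, which you flag but do not close. Resolution (i) is unavailable: the hypothesis requires $\chi<\tilde{V}_N^{1-\alpha}$ strictly, so the window $(\tilde{V}_N^{1-\alpha}-\chi,\tilde{V}_N^{1-\alpha})$ never contains $0$ and your telescoping sum cannot be run all the way down to $\tilde{V}=0$ on the stated hypotheses. Resolution (ii) fails in principle: $\varphi$ is class-$\mathcal{K}$, so $\varphi(\tilde{V}^{1-\alpha})\to 0$ as $\tilde{V}\to 0$; cascading over shrinking windows produces lower bounds on $\varphi_k$ that degenerate to zero, and the cumulative step count need not be finite --- for instance $\varphi_k\propto\tilde{V}_k^{1-\alpha}$ gives $\tilde{V}_{k-1}=(1-c)\tilde{V}_k$, i.e.\ only geometric, never finite-time, decay. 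This is precisely why the paper's argument terminates at the \emph{positive} threshold $\beta_k^{1-\alpha}=1-\epsilon$: it never needs the bound \eqref{range} below that level, since one further application of the exact relation \eqref{eq:23} already yields $\tilde{V}_{k-1}=0$. If you graft that termination mechanism onto your concavity estimate (use the linear decrement only to reach the threshold, then invoke non-negativity for the final step), your proof closes and aligns with \eqref{ep}.
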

\begin{proof}
The proof could be founded in Appendix \ref{C}.
\end{proof}

Lemma \ref{l3} provides a new insight to prove the finite-time inverse convergence for the matrix $K_k$ in \eqref{K}, which is also an extension of finite-time vector forward convergence \cite{hamrah2019discrete} to matrix inverse convergence. Based on Lemma \ref{l3}, then we develop a matrix-valued Lyapunov function in the following theorem to show the inverse convergence of $K_k$.
\begin{theorem}[Finite-time inverse convergence of $K_k$]\label{th:convergence}
	Let $\xi^{\star}$ be the smallest integer for the inverse convergence of matrix $K_k$. The parameter matrix $K_k$ in \eqref{K} converges inversely when $0\leq k< \xi^{\star}$ where $\xi^{\star}$ satisfies \eqref{ep}.
\end{theorem}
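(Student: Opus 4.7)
The plan is to invoke Lemma \ref{l3} with a suitably chosen matrix-valued Lyapunov function built around the deviation $E_k \triangleq K_k - K^{\star}$, where $K^{\star}$ is a fixed point of the simplified Riccati-type recursion established in Corollary \ref{c1}. Accordingly, the first step is to guarantee the existence of such a $K^{\star}$. Using Lemma \ref{l1}, the iterates $K_k$ lie in the cone of positive definite Hermitian matrices; combining this with the standard monotonicity of Riccati-like recursions (the right-hand side of Corollary \ref{c1} is monotone and concave in $K_{k+1}$ on the positive definite cone), one obtains that the backward sequence $\{K_N, K_{N-1}, \ldots\}$ is bounded and possesses a fixed point $K^{\star} = K^{\star\,*} \succ 0$ solving the associated discrete algebraic Riccati equation.

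Next, I would choose the candidate Lyapunov function $\tilde{V}(X) = \mathrm{tr}(X^{\mathrm{T}} S X)$ for some fixed positive definite weighting $S$ (e.g.\ $S = I_n$ to keep things concrete), and set $\tilde{V}_k \triangleq \tilde{V}(K_k - K^{\star})$. Using the closed-form recursion in Corollary \ref{c1} for both $K_k$ and $K^{\star}$ and subtracting, one gets a recursion of the form
\begin{align*}
K_k - K^{\star} = W_k^{\mathrm{T}}(K_{k+1}-K^{\star})W_k - \Delta_k,
\end{align*}
where $\Delta_k$ collects the difference of the two quadratic correction terms $R_k^{-1} W_k^{\mathrm{T}} K_{k+1}\Gamma_k \Gamma_k^{\mathrm{T}} K_{k+1} W_k$. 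After expanding, $\Delta_k$ can be rewritten as a positive semidefinite quadratic form in $(K_{k+1}-K^{\star})$, which is the contractive mechanism driving convergence.

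With this expression in hand, the third step is to compute $\tilde{V}_{k-1}-\tilde{V}_k$ and to massage the bound into the form $-\varphi_k\, \tilde{V}_k^{\alpha}$ required by \eqref{eq:23}. The natural choice is $\alpha \in (0,1)$ together with $\varphi_k = \varphi(\tilde{V}_k^{1-\alpha})$ chosen as a class-$\mathcal{K}$ function whose local behaviour near $\tilde{V}_N^{1-\alpha}$ satisfies \eqref{range}: this amounts to verifying a lower bound of the type $\varphi_k/\varphi_N \ge 1-\epsilon$ on a neighbourhood of radius $\chi < \tilde{V}_N^{1-\alpha}$, which follows from the continuity of $\varphi$ on the compact interval determined by the boundedness of the backward iterates. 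Once this is verified, Lemma \ref{l3} applies verbatim and yields the existence of the smallest integer $\xi^{\star}$ satisfying \eqref{ep} such that $K_k \to K^{\star}$ inversely for $0 \le k < \xi^{\star}$.

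The main obstacle I expect is the third step: forcing the raw decrement $\tilde{V}_{k-1}-\tilde{V}_k$, which is algebraically a polynomial in $E_{k+1}$ of degree at most two, into the exact shape $-\varphi_k\tilde{V}_k^\alpha$ required by Lemma \ref{l3}. This requires carefully lower-bounding the contraction coefficient of $W_k^{\mathrm{T}}(\cdot)W_k - \Delta_k$ uniformly over the backward trajectory and absorbing the residual into a class-$\mathcal{K}$ function that meets \eqref{range}. A secondary subtlety is ensuring that the time-varying nature of $W_k$, $\Gamma_k$, $P_k$, $Q_k$ does not destroy the monotonicity/contractivity needed for the existence of $K^{\star}$; if necessary I would add a mild uniformity hypothesis (bounded, stabilizable $(W_k,\Gamma_k)$) consistent with the standing assumptions of the paper.
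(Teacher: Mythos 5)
Your skeleton matches the paper's Appendix \ref{D} at the top level: both take the trace Lyapunov function $\tilde{V}_k=\mathrm{tr}\bigl[(K_k-K^{\star})^{\mathrm{T}}(K_k-K^{\star})\bigr]$ (up to the constant $\tfrac12$ and your weight $S$) on the deviation from the steady state and then try to land in the hypotheses of Lemma \ref{l3}. Your first step --- actually proving that a fixed point $K^{\star}$ of the recursion in Corollary \ref{c1} exists via monotonicity of the Riccati-type map --- is an addition the paper does not make (it simply posits a steady-state matrix), and it is a sensible one, though as you note it needs a uniformity/time-invariance hypothesis on $(W_k,\Gamma_k,P_k,Q_k)$ that the theorem statement does not supply.

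The genuine gap is your third step, and it is not a cosmetic one. First, the intermediate claim that $\Delta_k$ --- the difference of the two quadratic correction terms $R_k^{-1}W_k^{\mathrm{T}}K_{k+1}\Gamma_k\Gamma_k^{\mathrm{T}}K_{k+1}W_k$ evaluated at $K_{k+1}$ and at $K^{\star}$ --- ``can be rewritten as a positive semidefinite quadratic form in $(K_{k+1}-K^{\star})$'' is not justified and is false in general: it is a difference of two positive semidefinite matrices whose weights $R_k^{-1}=\bigl(\Gamma_k^{\mathrm{T}}(Q_k+K_{k+1})\Gamma_k\bigr)^{-1}$ themselves depend on the iterate, so its sign is indefinite and it contains terms that are affine, not quadratic, in the error. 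Without this, the ``contractive mechanism'' you rely on is not established. Second, the decisive conversion of the decrement into the exact form $-\varphi_k\tilde{V}_k^{\alpha}$ with $\varphi_k$ satisfying \eqref{range} is left as something to ``massage,'' which is precisely where the content of the theorem lives. For comparison, the paper closes this step by bounding $\tilde{V}_{k-1}-\tilde{V}_k$ via trace Cauchy--Schwarz to get a factor $|\eta_k-1|\tilde{V}_k^{1-1/p}$ and then \emph{assuming} the error matrices are collinear, $\tilde{K}_{k-1}=\mu_k\tilde{K}_k$, so that $\mu_k$ (hence $\varphi_k$) can be chosen explicitly to satisfy \eqref{range}; your more honest route of deriving the decrement from the actual error recursion runs straight into the difficulty that this collinearity ansatz is used to sidestep, and you would need either to justify that ansatz or to produce a genuine uniform contraction bound, neither of which your proposal does.
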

\begin{proof}
The proof could be founded in Appendix \ref{D}.
\end{proof}
\begin{corollary}[Inverse Convergence of $F_k$]\label{c2}
	When the system structure $W_k$ is fixed, the parameter vector $F_k$ in \eqref{F_k} converges inversely when $0\leq k< \xi^{\star}+1$.
\end{corollary}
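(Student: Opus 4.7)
The plan is to reduce Corollary \ref{c2} to Theorem \ref{th:convergence} by exploiting the explicit algebraic dependence of $F_k$ on $K_{k+1}$ displayed in \eqref{F_k}. Once the system structure $W_k$ is fixed and the attack-selection vector $\Gamma_k$ together with the weight $Q_k$ are regarded as prescribed problem data (as in problem $\mathcal{P}_1$), the identity $F_k = -R_k^{-1}\Gamma_k^{\mathrm{T}} K_{k+1} W_k$ realizes $F_k$ as a (rational, hence continuous) function of $K_{k+1}$ alone, so that convergence of $K_{k+1}$ should propagate to convergence of $F_k$ by a direct continuity argument.

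First I would verify well-posedness of the map $K_{k+1} \mapsto F_k$ and its continuity. By Lemma \ref{l1}, $K_{k+1} \succ 0$; combined with the positive-definite weight $Q_k \succ 0$ this yields $Q_k + K_{k+1} \succ 0$. Under the natural nondegeneracy $\Gamma_k \neq 0$ (which just says an attack is actually being launched at time $k$), the scalar $R_k = \Gamma_k^{\mathrm{T}}(Q_k + K_{k+1})\Gamma_k$ is strictly positive, so $R_k^{-1}$ is well-defined and depends continuously on $K_{k+1}$. Consequently the entire map $K_{k+1} \mapsto F_k$ is continuous on the cone of positive-definite matrices.

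Second, I would invoke Theorem \ref{th:convergence}: there exists a steady-state matrix $K^{\star}$ such that $K_j \to K^{\star}$ inversely for $0 \leq j < \xi^{\star}$. Applying the index shift $j = k+1$, the shifted backward sequence $\{K_{k+1}\}$ likewise inherits inverse convergence to $K^{\star}$, and composing with the continuous map above yields $F_k \to F^{\star} := -(R^{\star})^{-1}\Gamma_k^{\mathrm{T}} K^{\star} W_k$ with $R^{\star} = \Gamma_k^{\mathrm{T}}(Q_k + K^{\star})\Gamma_k$. This is precisely the claim of Corollary \ref{c2} modulo the range of $k$.

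The main obstacle I anticipate is the careful bookkeeping of the index shift to match the stated range $0 \leq k < \xi^{\star} + 1$ in Corollary \ref{c2} against the range $0 \leq k < \xi^{\star}$ in Theorem \ref{th:convergence}. Because $F_k$ is the image of $K_{k+1}$ under the recursion depicted in Fig. \ref{fig:flow}, and $K_{k+1}$ is produced one step earlier than $K_k$ in the backward iteration, the steady-state regime for $\{F_k\}$ is picked up one index later than that of $\{K_k\}$. I would make this precise by restating Definition \ref{d2} for the shifted sequence $\{K_{k+1}\}$ and verifying by a short induction on the backward-recursion step that the boundary index $k = \xi^{\star}$ is indeed admissible for $F_k$, thereby obtaining the $\xi^{\star} + 1$ bound rather than $\xi^{\star}$.
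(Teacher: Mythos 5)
Your proposal is correct and follows essentially the same route as the paper: the paper's own proof likewise reduces the inverse convergence of $F_k$ to that of $K_{k+1}$ via the explicit formula \eqref{F_k} and Theorem \ref{th:convergence}, then shifts the index to obtain the $\xi^{\star}+1$ bound. Your added well-posedness and continuity check on $R_k^{-1}$ (requiring $\Gamma_k \neq 0$ and invoking Lemma \ref{l1}) is a refinement the paper leaves implicit, but the core argument is identical.
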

\begin{proof}
	Since $F_k=-R_k^{-1}\Gamma_k^{\mathrm{T}} K_{k+1} W_k$ with $R_k=\Gamma_k^{\mathrm{T}}(Q_k+K_{k+1})\Gamma_k$ and $K_k$ in \eqref{K}, the proof can be completed if the convergence of $K_{k+1}$ is guaranteed. When $0\leq k< \xi^{\star}$, $K_k$ converges inversely. Thus, $F_k$ converges when $0\leq k< \xi^{\star}+1$. The proof is completed.
\end{proof}




\begin{figure}[t]
	\centering
	\includegraphics[width=0.45\textwidth]{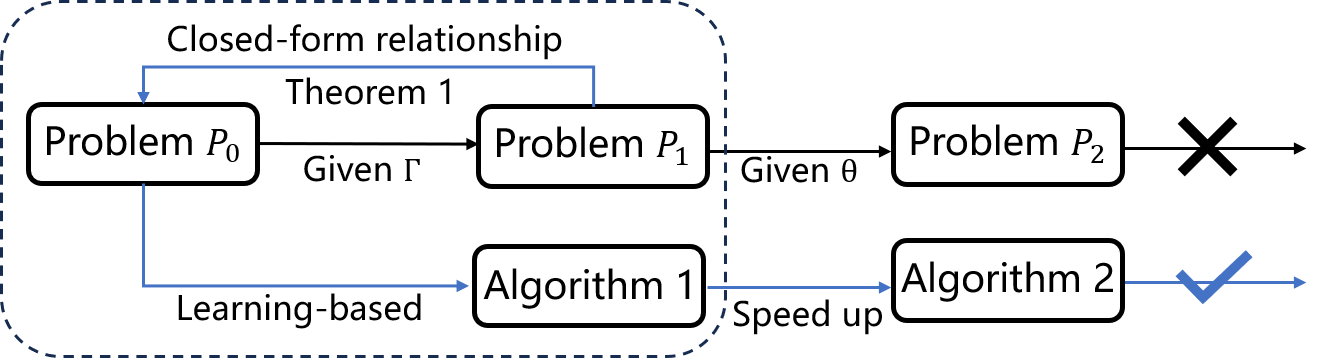}
	\caption{The solving process of problem $\mathcal{P}_0$}
	\label{solving}
	\vspace*{-10pt}
\end{figure}

\section{Learning-based Joint Attack Strategy}\label{attack-selection}
In this section, we propose a heuristic two-stage learning-based joint attack
 algorithm for solving problem $\mathcal{P}_0$, whose idea is shown in Fig. \ref{solving}. Concretely, we first obtain the optimal sequential attack signal when the attack selection strategy $\Gamma$ is given, which is derived in Theorem \ref{th:optimal} by addressing problem $\mathcal{P}_1$. Note that computing $\theta_k$ necessitates knowledge of $\Gamma_{k}$, $\Gamma_{k+1}$, $\ldots$, $\Gamma_{N}$ from Theorem \ref{th:optimal}, i.e., 
\begin{align}\label{eq:f}
	\theta_k = f(\Gamma_k,\Gamma_{k+1}, \ldots, \Gamma_{N}),
\end{align}
where $f$ is a function of $\Gamma$ and characterizes the closed-form relation between the injected attack signal and the attack selection strategy, which is obtained by solving problem $\mathcal{P}_1$.
Then, with the prior relationship between $\Gamma$ and $\theta$ in Theorem \ref{th:optimal}, we desire to deal with problem $\mathcal{P}_2$. In essence, problem $\mathcal{P}_2$ is a multi-stage decision problem with $0$-$1$ integer variables. The challenges of tackling problem $\mathcal{P}_2$ come from the $0$-$1$ integer variables. It is difficult to directly obtain the analytical optimal solution of such a multi-stage $0$-$1$ integer programming problem.
Moreover, the size of the optimal sequential attack signal $\theta_k$ depends on the future attack selection strategy $\Gamma_k$, $\Gamma_{k+1}$, $\Gamma_{k+2}$, $\ldots$, $\Gamma_{N}$, as shown in \eqref{theta_without}, making the problem more complicated to solve.
A brute-force approach to solving this problem would result in an intractable exponential time complexity of $O(n^N)$.

To address the challenge of high time complexity, we employ a heuristic algorithm.
Specifically, we use Reinforcement Learning (RL) approach, which shows great potential for handling optimization problems with sequential decision variables by leveraging the chronological information provided by a Markov Decision Process (MDP) approach.
It is worth noting that for a problem to be cast as an MDP, it must satisfy the Markov condition, which stipulates that the transition and reward are contingent \textit{solely} on the current situation~\cite{puterman1990markov}, rather than on past or future states.
However, the optimal sequential attack signal $\theta_k$ in \eqref{theta_without} cannot be directly derived based on the given attack selection strategy $\Gamma_k$, i.e., \textit{problem $\mathcal{P}_2$ cannot be modeled as an MDP process}. How to tackle the dependency relationship between $\theta_k$ and $\Gamma_k,\Gamma_{K+1},\ldots,\Gamma_{N}$ is critical for designing a feasible scheme to obtain the multi-stage attack selection strategy.

To circumvent this issue and ensure the Markov property is maintained, we recall problem $\mathcal{P}_0$, in which we add the constraint \eqref{eq:f} and the speciality is to exploit the actual optimal attack signal $\theta^*_k$ derived in \eqref{theta_without} as the prior knowledge and introduce a penalty term in the computation of the reward function. The solving procedure for this problem $\mathcal{P}_0$ will be elaborated in the subsequent subsections.  
\begin{align}\label{problem_3}
	\mathbf{\mathcal{P}_0}: \quad & \mathrm{min}_{\{\Gamma, \theta\}} ~J= J_1 + J_2 \\
	                              & \mathrm{s.t.}~ x_{k+1}^{a}=W_k x^{a}_k + \Gamma_k \theta_k, \nonumber                                   \\
	                              & \quad~~\theta^*_k= f(\Gamma_k,\Gamma_{k+1},\cdots, \Gamma_{N}), \forall k. \nonumber
\end{align}
Here, $\phi$ is a constant weight coefficient.

\subsection{Problem $\mathcal{P}_0$ Remodeling}
First, we consider a finite-horizon discounted Markov decision process (MDP) to directly solve problem $\mathcal{P}_0$ in \eqref{problem_3}, which is defined
as $\mathcal{M} = (\mathcal{S},\mathcal{A},P,N,r,\gamma)$ below:

\begin{itemize}
	\item [1)] System States Space $\mathcal{S} = \{x^a_k\}$: The system state under the optimal sequential attack at time $k$ is designed as $x_k^a=[x_k^{\{a,1\}},x_k^{\{a,2\}},\ldots,x_k^{\{a,n\}}]^{\mathrm{T}}$, where $x_k^{\{a,i\}}\in \mathbb{R}$ is the state of the $i$-th agent under attacks.
	\item [2)] Action Space $\mathcal{A}=\{\Gamma,\theta\}$: Action space consists of the attack selection decision $\Gamma$ and the attack signal $\theta$.
	      A binary attack selection vector $\Gamma_k=[\gamma_k^1,\gamma_k^2,\ldots,\gamma_k^n]^{\mathrm{T}}$ is used to denote the attack selection decision at time $k$ with $\gamma_k^i\in \{0,1\}$. If $\gamma_k^i=1$, then the $i$-th agent is selected to be compromised at time $k$, otherwise $\gamma_k^i=0$.
	      The attack signal $\theta_k \in\mathbb{R}$ is applied for compromising states at time $k$.
	\item [3)] System Dynamics $P$: With the given state $x^a_k$, the attack selection decision $\Gamma_k$ and the attack signal $\theta_k$, the dynamics of the state are depicted as \eqref{eq2}, namely
	      \begin{equation*}
		      x_{k+1}^{a}=W_k x^{a}_k + \Gamma_k \theta_k.
	      \end{equation*}
	\item [4)] Reward function $r:\mathcal{S}\times \mathcal{A}\rightarrow \mathbb{R}$: 
	We adopt the objective function in \eqref{problem_3} to design the one-step reward function
	\begin{align}\label{reward}
		r_k(x^a_k,\Gamma_k,\theta_k|x^*, \theta^*_k)= & -\|x^a_{k+1}-x^*\|^2_{P_k} - \|\Gamma_k \theta_k\|^2_{Q_k} \nonumber \\
										& - \phi(\theta_k-\theta_k^*)^2.
	\end{align}
	\item [5)] Constants: $N\in\mathbb{R}$ is the horizon (episode length) and $\gamma\in (0,1)$ is the discount factor. 
\end{itemize}

Our goal is to find a stochastic policy $\pi:\mathcal{S}\rightarrow\mathcal{A}$.
With the given state $x^a_k$, the policy decides what action $\Gamma_k, \theta_k$ to take. 
Concretely, the objective of the adversary is to find a good policy $\pi$ to maximize the expected discounted total reward, given by
\begin{equation}
	\mathbb{E}_{\pi}\left[\sum_{k=0}^{N}\gamma^k r_k(x^a_k,\Gamma_k,\theta_k|x^*, \theta^*_k)\right].
\end{equation}

\begin{algorithm}[t]
	\caption{One-stage Learning-based Algorithm}
	\label{algo1}
	\KwIn {{The expected malicious state $x^*$; the initial state $x_0$; episode length $N$; stopping criterion $\delta$; data buffer $\mathcal{B}$}}
	\KwOut{A trained policy $\pi^{*}$}
	Initialize the policy $\pi$, last performance $J_l=0$, current performance $J_c=0$\\
	\While{$J_c-J_l<\delta$}{
		$J_l = J_c$;\\
		Initialize the $x_0$, buffer $\mathcal{B}=\emptyset$;\\
		// data collection\\
		\For {$k=0,1,2,\cdots,N$}
		{
			$\Gamma_k, \theta_k = \pi(x^a_k)$; // Decision procedure\\
			$x^{a}_{k+1} = W_k x^{a}_k + \Gamma_k \theta_k$; // Model dynamic \eqref{eq2} \\
			$\mathcal{B} \leftarrow \mathcal{B} \cup \{x^a_k,\Gamma_k,\theta_k\}$; // Store data\\
		}
		// computing rewards \\
		\For {$k=0,1,2,\cdots,N$}
		{
			Compute $\theta^*_k$ as described in Fig. \ref{fig:flow}.\\
			// adopting \eqref{eq:f} to compute $\theta^*_k$\\
			$r_k = R(x^a_k,\Gamma_k,\theta_k|x^*, \theta^*_k)$; // Reward function \eqref{reward} \\
			Substitute $(x^a_k,\Gamma_k,\theta_k)$ with $(x^a_k,\Gamma_k,\theta_k,r_k)$ in $\mathcal{B}$.
		}
		Compute the current performance $J_c=J$\\
		// model training\\
		Train the policy $\pi$ with data in $\mathcal{B}$ using PPO; See details in \cite{schulman2017proximal}.\\
	}
	\Return{$\pi^*=\pi$}
\end{algorithm}
\begin{algorithm}[t]
	\caption{Heuristic Two-stage Learning-based Joint Attack Algorithm (HTL-JA)}
	\label{algo2}
	\KwIn {{The expected malicious state $x^*$; the initial state $x_0$; the number of trajectory $T_r$.}}
	\KwOut{Sequential attack selection strategy $\Gamma$ and sequential attack signal $\theta$.}
	\textbf{Stage $1$:} Attain a sub-optimal policy $\pi$ under Algorithm \ref{algo1} with a loose stopping criterion;\\
	\textbf{Stage $2$:} 
	Generate $T_r$ alternative attack strategies $\{(\Gamma^{(1)}, \theta^{(1)}), \ldots, (\Gamma^{(T_r)}, \theta^{(T_r)})\}$ based on $\pi$;\\
	Initialize best strategy $BS=\emptyset$; current min objective $J^*=\infty$; \\
	\For {$i=1,2,\cdots,T_r$}
	{
		Compute the optimal sequential attack signal $\theta^{*(i)}$ given $\Gamma^{(i)}$ based on Theorem \ref{th:optimal};\\
		\If{$J(\Gamma^{(i)}, \theta^{*(i)})<J^*$}
		{
			$BS \leftarrow (\Gamma^{(i)}, \theta^{*(i)})$;\\
			$J^* \leftarrow J(\Gamma^{(i)}, \theta^{*(i)})$;\\
		}
	}
	\Return $BS$;
\end{algorithm}

\subsection{Learning-based Joint Attack Strategy Design}
Policy-based algorithms have achieved great success in solving complicated dynamic problems~\cite{sutton2018reinforcement, lillicrap2015continuous}, enjoying good sample efficiency.
The key idea of the policy-based algorithms is to increase the probability of the actions leading to higher rewards.
We propose a learning-based algorithm (Algorithm \ref{algo1}) that utilizes our prior knowledge about the relationship between the attack selection strategy and the attack signal in \eqref{eq:f}.
We initialize a random policy $\pi_0$ and follow the steps below to update the policy $\pi_t$ iteratively.
Our algorithm cycles through three phases, described below.
\begin{enumerate}
	\item Data collection (Line 4-10): 
	Initialize the environment state as $x^a_0$ and the buffer $\mathcal{B}=\emptyset$.
	The agent interacts with the environment by taking actions $\{\Gamma_k, \theta_k\}$ sampling from the current policy $\pi_t(x^a_k)$.
	The environment then updates the state according to the dynamic function to attain $x^a_{k+1} = P(x^a_{k}, \Gamma_k, \theta_k)$. 
	Store the data in the buffer $\mathcal{B}$.
	It moves to the next phase when an episode ends (when $k>N$).
	\item Reward computation (Line 11-17):
	The data in the data buffer $\mathcal{B}$ do not have the reward information because computing the reward in \eqref{eq:f} requires the optimal sequential attack signal $\theta^*$ which needs complete knowledge of the sequential attack selection strategy $\Gamma$.
	Thus, the optimal sequential attack signal can only be attained after a complete episode.
	We first compute the $\theta_0^*, \cdots, \theta_{N-1}^*, \theta_N^*$ with our methods described in Fig. \ref{fig:flow}.
	Then, we can compute the reward $r_k$ with \eqref{reward} using $(x^a_k,\Gamma_k,\theta_k, \theta^*_k)$.
	At last, we store the reward into the buffer $\mathcal{B}$ by substituting $(x^a_k,\Gamma_k,\theta_k)$ with $(x^a_k,\Gamma_k,\theta_k,r_k)$.
	\item Policy training (Line 19-20):
	Now we have a buffer $\mathcal{B} = \{(x^a_k,\Gamma_k,\theta_k,r_k)\}$, which contains the state-action-reward tuple.
	With this buffer, we update the policy parameters adopting the Proximal Policy Optimization (PPO) algorithm, described in \cite{schulman2017proximal}.
\end{enumerate}
We repeat these three phases until the policy is converged where the parameter $\delta$ is set as small as possible.
The convergence of the obtained solution of PPO is discussed in \cite{schulman2015trust}.
When we assume $n=3$ and $N=50$,
directly applying Algorithm \ref{algo1} to solve problem $\mathcal{P}_0$ in \eqref{problem_3} would reach the optimal performance of the objective function value $J=30$ with $4.1\times 10^{4}$ samples, which is much less than the brute-force approach with $2^{3^{50}}\approx 1.4\times 10^{45}$ samples, as demonstrated in Table \ref{table4:comparison} of Section \ref{VI-B}.

\subsection{A Two-stage Mechanism to Speed Up}
\begin{figure}[t]
	\centering
\includegraphics[width=0.50\textwidth]{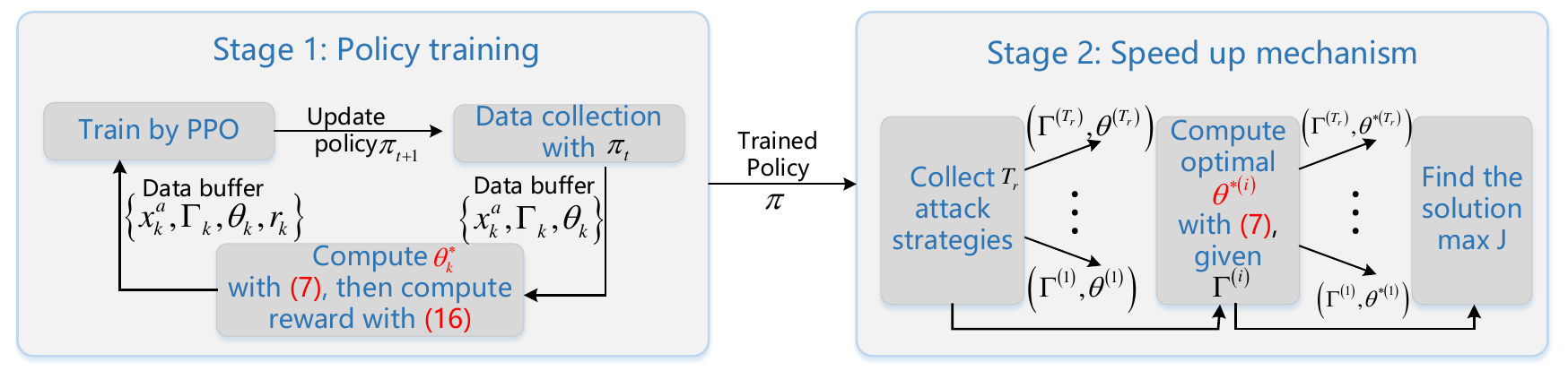}
	\caption{The schematic of Algorithm $2$ (HTL-JA)}
	\label{fig:algorithm1}
	\vspace*{-10pt}
\end{figure}

\begin{figure}[t]
	\centering	\includegraphics[width=0.35\textwidth,height=0.2\textwidth]{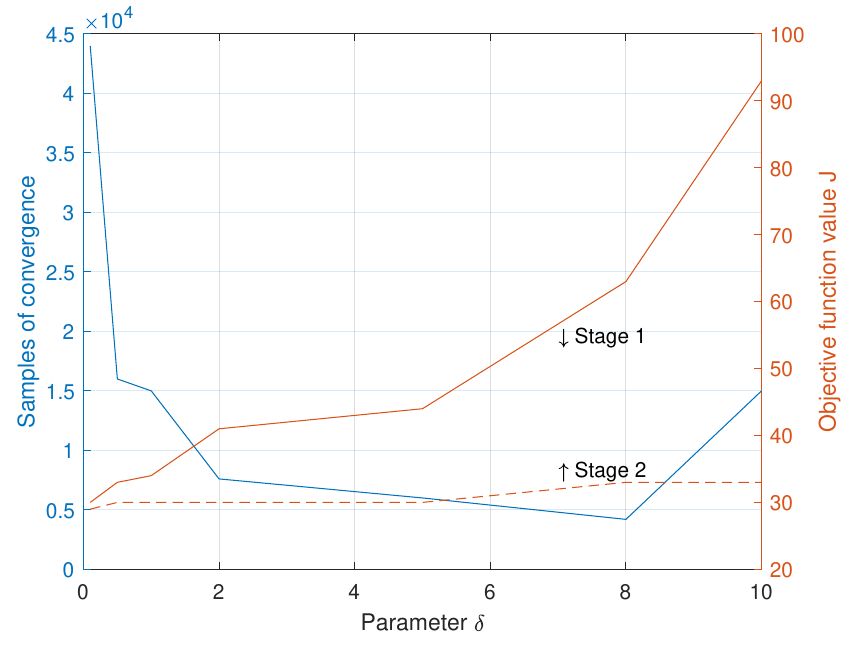}
	\caption{The effects of $\delta$ on samples and objective function value }
        \label{fig4:delta}
	\vspace*{-10pt}
\end{figure}
\vspace{-0.1cm}
Algorithm \ref{algo1} is referred to one-stage learning-based algorithm. Note that the convergence procedure of Algorithm \ref{algo1} suffers from a long tail effect: the objective function reaches the sub-optimal very quickly, but it takes multiple times of epochs to reach the optimal value. 
Thus, we utilize our system knowledge to propose a two-stage learning-based algorithm to tackle this problem and find an approximate solution with much less consumption, as shown in Algorithm \ref{algo2}.
The stage $1$ depends on Algorithm \ref{algo1} and is almost exactly the same as Algorithm \ref{algo1} except the setting of the parameter $\delta$. In stage $1$, $\delta$ is set to be larger than that in Algorithm \ref{algo1} since we aim to achieve a sub-optimal policy $\pi$ rather than attain the optimal converged policy in Algorithm \ref{algo1}. 
In stage $2$, with the sub-optimal policy $\pi$, the agent generates several alternative sequential attack selection sequences $\{\Gamma^{(1)}, \ldots, \Gamma^{(T_r)}\}$ where $T_r$ is the number of the sequential attack selection strategies.
Then we traverse all the alternative sequential attack selection strategies to compute corresponding theoretically optimal sequential attack signals $\{\theta^{*(1)}, ..., \theta^{*(T_r)}\}$ based on Theorem \ref{th:optimal}. 
The following lemma is provided to show the feasibility of the obtained solution in step $6$-$10$ of Algorithm \ref{algo2}.
\begin{lemma}
    With the optimal sequential attack signal $\theta^{*(i)}$ in Theorem \ref{th:optimal}, we have 
    \begin{align}\label{eq:36}
        J(\Gamma^{(i)},\theta^{*(i)})\leq J(\Gamma^{(i)},\theta^{(i)}),
    \end{align}
    for any $i\in \{1,2,\ldots,T_r\}$.
\end{lemma}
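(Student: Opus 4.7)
The plan is to argue that this lemma is essentially a direct consequence of the optimality established in Theorem \ref{th:optimal}. The key observation is that $\theta^{*(i)}$ is, by construction, the minimizer of $J$ over all admissible attack signals when the attack selection strategy is fixed to $\Gamma^{(i)}$. Once this is recognized, the inequality \eqref{eq:36} reduces to the defining property of a minimum.

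More concretely, the first step is to recall problem $\mathcal{P}_1$ in \eqref{problem_1}: with the attack selection strategy fixed at $\Gamma = \Gamma^{\#}$, $\mathcal{P}_1$ is the optimization of $J = J_1 + J_2$ over the sequential attack signal $\theta$, subject to the closed-loop dynamics \eqref{eq2}. By Theorem \ref{th:optimal}, the optimal minimizer of $\mathcal{P}_1$ under the choice $\Gamma^{\#} = \Gamma^{(i)}$ is precisely $\theta^{*(i)}$, given by the closed-form expression \eqref{theta_without} with the critical parameters $F_k$ and $M_k$ in \eqref{F_k} and \eqref{M_k}. Therefore, I would write
\begin{align*}
\theta^{*(i)} = \arg\min_{\theta} J(\Gamma^{(i)}, \theta) \quad \text{subject to} \quad x^{a}_{k+1} = W_k x^{a}_k + \Gamma^{(i)}_k \theta_k.
\end{align*}

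The second step is to note that the candidate signal $\theta^{(i)}$, generated by the sub-optimal policy $\pi$ from stage $1$ of Algorithm \ref{algo2}, is a feasible signal for problem $\mathcal{P}_1$ under $\Gamma = \Gamma^{(i)}$, since it produces a trajectory satisfying \eqref{eq2}. Therefore, by the defining minimizing property of $\theta^{*(i)}$, we immediately obtain $J(\Gamma^{(i)}, \theta^{*(i)}) \leq J(\Gamma^{(i)}, \theta^{(i)})$, which is the claim \eqref{eq:36}. Since $i$ was arbitrary, the inequality holds for every $i \in \{1,2,\ldots,T_r\}$.

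I do not anticipate any genuine obstacle: the lemma is a restatement of the optimality property already established in Theorem \ref{th:optimal}, and its role in the paper is to formally justify the step of replacing $\theta^{(i)}$ with $\theta^{*(i)}$ in the traversal loop (lines $6$--$10$) of Algorithm \ref{algo2}. The only subtle point worth emphasizing in the write-up is that $\theta^{(i)}$ need not itself be optimal, even locally, with respect to $\Gamma^{(i)}$ --- it is simply a by-product of the sub-optimal RL policy $\pi$ --- and so substituting $\theta^{*(i)}$ can only improve (or preserve) the objective. This monotonic improvement is what guarantees that stage $2$ of HTL-JA returns a strategy no worse than what stage $1$ would have returned on its own.
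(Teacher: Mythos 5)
Your proposal is correct and follows essentially the same route as the paper, which likewise observes that $\theta^{*(i)}$ is the optimal solution of problem $\mathcal{P}_1$ under the fixed selection strategy $\Gamma^{(i)}$ and that the inequality is then the defining property of the minimizer. Your write-up merely spells out the feasibility of $\theta^{(i)}$ more explicitly, which is a harmless elaboration of the paper's one-line argument.
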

\begin{proof}
    Since $\theta^{*(i)}$ is the optimal solution of problem $\mathcal{P}_1$, \eqref{eq:36} can be directly validated under the given attack selection strategy $\Gamma^{(i)}$.
\end{proof}




In a word, the two-stage learning-based attack framework is provided in Fig. \ref{fig:algorithm1}, where the trained sub-optimal policy $\pi^*$ in stage $1$ paves way to speed up the procedure of reaching the optimal solution of problem $\mathcal{P}_0$ in stage $2$.
{\color{black}
\begin{remark}[Parameter $\delta$]
   To deepen our understanding of $\delta$, herein we discuss the effects of the parameter $\delta$ on the proposed one-stage and two-stage learning-based algorithms. Note that the difference between Algorithm \ref{algo1} and stage $1$ in Algorithm \ref{algo2} lies in the parameter $\delta$. When $\delta$ approaches zero, stage $1$ in Algorithm \ref{algo2} is the same as Algorithm \ref{algo1}. Consider a consensus process with three agents, shown in Section \ref{IV}. The relationship among $\delta$, the samples of convergence and the objective function value $J$ is depicted in Fig. \ref{fig4:delta}. When $\delta$ approaches zero, we find that the samples of convergence increases rapidly. In other words, Algorithm \ref{algo1} needs large number of samples to obtain the feasible solution than Algorithm \ref{algo2}. When $\delta=8$, the proposed algorithms do not reach the minimum objective function value while keeping the minimum samples. In addition, it illustrates that the two-stage learning mechanism plays an important role in accelerating the convergence process and the objective function value $J$ will converge to $30$ and decreases as $\delta$ approaches zero.   
\end{remark}

\begin{remark}[Optimality]
    Note that the policy gradient optimization method is the basis of the proposed heuristic two-stage learning-based algorithm. Hence, the optimality of the proposed algorithm also depends on that of the policy gradient method, whose convergence and optimality are analyzed in \cite[Theorem 5]{agarwal2021theory}. We find that the lower bound of the optimality error between the objective function value under the optimal solution and that under the actual solution is less than required bound $\varepsilon$. Concretely, the bound $\varepsilon$ is proportional to the dimensions of states and the discount factor $\gamma$, and inversely proportional to iterations $T$. In addition, in \cite{wang2019neural}, the upper bound of the global optimality of stationary point is demonstrated. Both works provide the feasible analysis for the convergence and optimality of the policy gradient method, thus showing the potential insights to guarantee the convergence and optimality of the proposed algorithm, which deserves further investigation in the future.  
   
   \end{remark} }

\section{Simulation Results}\label{IV}
In this section, we evaluate the performance of the optimal sequential attack signal and attack selection strategy, respectively.

\subsection{Performance of Optimal Sequential Attack Signal}\label{IV-A}
In this part, we analyze the driving performance of the obtained sequential attack signal and the inverse convergence of its critical parameters $K_k$ and $F_k$.

Consider a consensus process with three agents where the dynamics of the whole system satisfy \eqref{eq1}. We set the matrix $W=I_3-0.2*L$, which can achieve the average consensus without attacks. Meanwhile, the system is stable and controllable. In the linear network, the Laplacian matrix $L=[1~-1~0;-1~2~-1;0~-1~1]$ and $L=[2~ -1~ -1;-1~ 2~ -1;-1~ -1~ 2]$ in the circle network. Let time $N=50$, and weight matrices $P_k=Q_k=H=I_3$ for all $0\leq k\leq N$. Then we set the critical parameter $K_{N+1}=H$, the initial state $x_0=[-1~ 12~-5]^{\mathrm{T}}$ and the expected malicious state $x^{*}=[0~0~0]^{\mathrm{T}}$.


\subsubsection{Effects of the sequential attack signal $\theta$ on the system states}
Given the attack selection strategy $\Gamma_1=[1~0~0]^{\mathrm{T}}$ and the linear network, the differences between the states without attacks and that with the injected attack signal $\theta$ are shown in Fig. \ref{fig:states}. It is illustrated that the injected sequential attack signal can steer the average consensus value $[2~2~2]^{\mathrm{T}}$ to the desired malicious state $x^{*}=[0~0~0]^{\mathrm{T}}$.

\begin{figure*}[t]
	\centering
	\subfigure[The variations of states with/without attacks ]{\label{fig:states}
		\includegraphics[width=0.31\textwidth,height=0.25\textwidth]{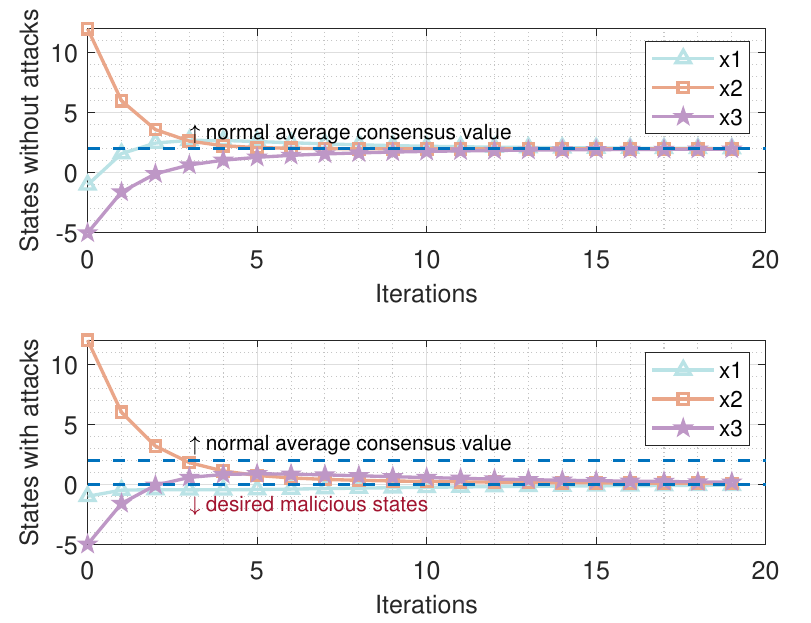}
	}
	\subfigure[Effects of $\Gamma$ on $\theta$ under different networks]{\label{fig:theta}
		\includegraphics[width=0.31\textwidth]{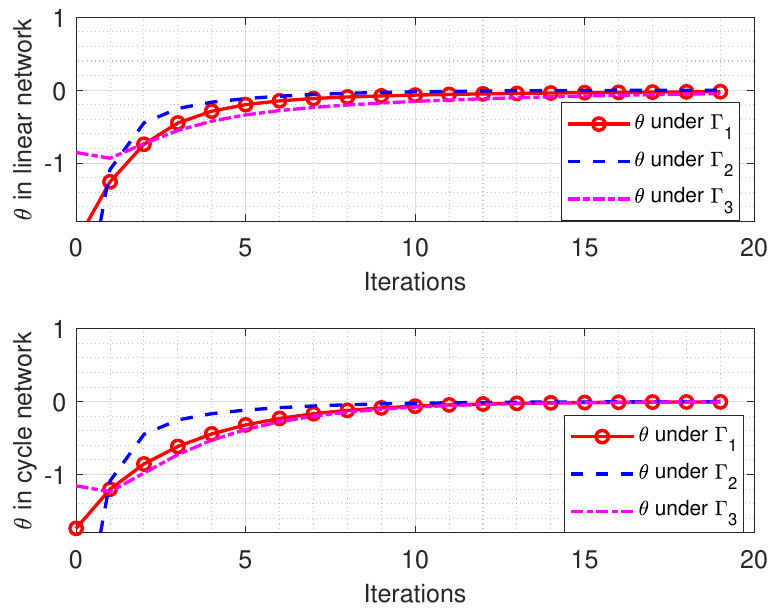}	
	}
        \subfigure[Effects of $x_0$ on $\theta$ under the cycle network]{\label{fig:initial_states}
		\includegraphics[width=0.31\textwidth]{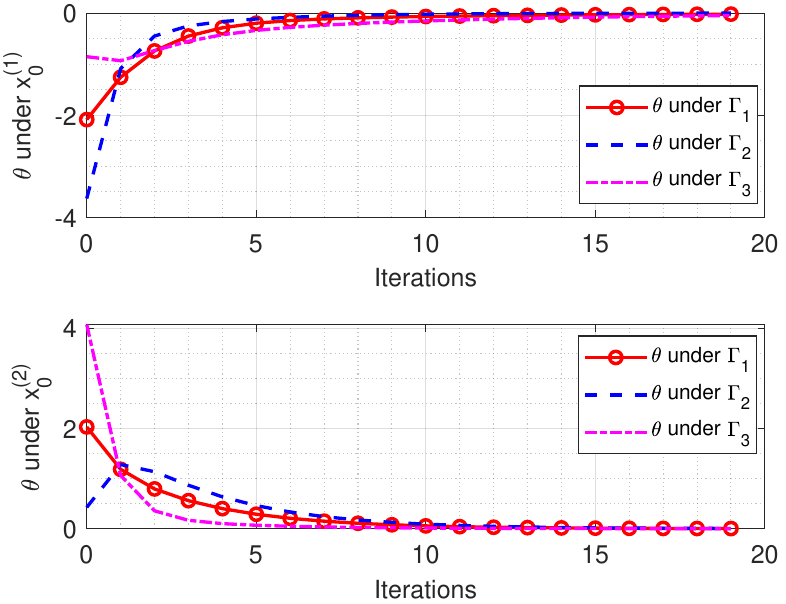}	
	}
	\caption{Performance of the optimal sequential attack signal.}
	
	\label{fig:effects}
	\vspace*{-5pt}
\end{figure*}

	
\subsubsection{Effects of the attack selection strategy $\Gamma$ on $\theta$ under different networks}
We set attack selection strategy $\Gamma_1=[1~0~0]^{\mathrm{T}}$, $\Gamma_2=[0~1~0]^{\mathrm{T}}$, and  $\Gamma_3=[0~0~1]^{\mathrm{T}}$. The effects of different attack selection strategies on the injected sequential attack signal under linear and cycle networks are shown in Fig. \ref{fig:theta}. Notably, the injected attack signal $\theta$ varies with the distinct attack selection strategies and approaches zero. Moreover, from Table \ref{table1:linear} and Table \ref{table2:cycle}, we find that there exists a trade-off between the injected attack energy and the objective function value regardless of the type of connected networks. Specifically, the more the objective function needs to be minimized while driving the states to the malicious states, the more attack energy needs to be injected.
\vspace{-0.1cm}
\begin{table}[ht]
	\caption{Results of different attack selection in linear networks}
	\label{table1:linear}
	\vspace{-10pt}
	\begin{center}
		\begin{threeparttable}
			\begin{tabular}{c c c c}
				\toprule
				\tabincell{c}{\textbf{Network} \\\textbf{structure}}  &
				\tabincell{c}{Attack selection\\
					strategy $\Gamma$}  & 
				\tabincell{c}{Attack energy \\
					$\sum_{k=0}^{N} \|\Gamma_k \theta_k \|^2$} &
				\tabincell{c}{\textbf{Objective}\\ $J$}  \\

                     \midrule
				\multirow{4}{*}{\textbf{Linear}}  
                    & $[1~ 0~ 0]^{\mathrm{T}}$ & $6.8787$ & $68.6639$\\ \cmidrule(l){2-4}

				\specialrule{0.00em}{1pt}{1pt}
				& $[0~ 1~ 0]^{\mathrm{T}}$ & $14.7073$ & $36.0239$\\ \cmidrule(l){2-4}
				
				\specialrule{0.00em}{1pt}{1pt}
				& $[0~ 0~ 1]^{\mathrm{T}}$ & $3.0517$ & $130.6101$\\
				
                    \bottomrule
			\end{tabular}
		\end{threeparttable}
	\end{center}
	\vspace{-10pt}
\end{table}

\begin{table}[ht]
	\caption{Results of different attack selection in circle networks}
	\label{table2:cycle}
	\vspace{-10pt}
	\begin{center}
		\begin{threeparttable}
			\begin{tabular}{c c c c}
				\toprule
				\tabincell{c}{\textbf{Network}\\ \textbf{structure}}  &
				\tabincell{c}{Attack selection\\
					strategy $\Gamma$}  & 
				\tabincell{c}{Attack energy \\
					$\sum_{k=0}^{N} \|\Gamma_k \theta_k \|^2$} &
				\tabincell{c}{\textbf{Objective}\\ $J$}  \\

                    \midrule
				\multirow{4}{*}{\textbf{Circle}}  & $[1~ 0~ 0]^{\mathrm{T}}$ & $5.9828$ & $64.0186$\\ \cmidrule(l){2-4}
				
				\specialrule{0.00em}{1pt}{1pt}
				& $[0~ 1~ 0]^{\mathrm{T}}$ & $14.7073$ & $23.3255$\\ \cmidrule(l){2-4}
				
				\specialrule{0.00em}{1pt}{1pt}
				& $[0~ 0~ 1]^{\mathrm{T}}$ & $4.9348$ & $72.1756$\\
				
                    \bottomrule
			\end{tabular}
		\end{threeparttable}
	\end{center}
	\vspace{-10pt}
\end{table}
\vspace{0.1cm}

\subsubsection{Effects of the initial states on $\theta$}
We set two types of initial states $x_0^{(1)}=[-1~ 12~-5]^{\mathrm{T}}$ and $x_0^{(2)}=[-1~ 10~-15]^{\mathrm{T}}$, and remain the other conditions. The effects of the initial states on the injected optimal sequential attack signal $\theta$ are shown in Fig. \ref{fig:initial_states}. It is illustrated that the size of the injected attack signal highly depends on the initial states.
Even though there exists the same initial state for agent $1$, the size of the injected attack signal is different and influenced by the initial states of other agents.


\subsubsection{Inverse convergence of $K_k$ and $F_k$}
In this part, we show the inverse convergence of $K_k$ and $F_k$, which are measured by the following index $K_{c}\triangleq \|K_k-K^{\star}\|$ and $F_{c}\triangleq \|F_k-F^{\star}\|$,
where $K^{\star}$ and $F^{\star}$ are the steady-state matrix of $K_k$ and $F_k$ for $0\leq k\leq N$, respectively. Given the attack selection strategy $\Gamma_1=[1~ 0~ 0]^{\mathrm{T}}$ and the other same conditions as the first part, the convergence error of $K_k$ and $F_k$ are illustrated as Fig. \ref{fig:convergence_linear} and Fig. \ref{fig:convergence_cycle}. Under the linear network, when the first or the third agent is compromised, the convergence error of $K_k$ and $F_k$ are the same, which is different from that when the only second agent is attacked. In other words, the effects of attack selection strategies on the injected attack signal depend on the network structure.
Especially, under the cycle network, the selection of the compromised agents does not affect the injected signal. Moreover, comparing Fig. \ref{fig:convergence_linear} with Fig. \ref{fig:convergence_cycle},
it is easy to reveal that the convergence rate of $F_k$ is greater than that of $K_k$, which is owing to the convergence of weight matrix $W_k$. From Table \ref{table3:time}, we show the inverse convergence times for $K_k$ and $F_k$, which validate the result in Corollary \ref{c2}. Meanwhile, we find that only $15$ iteration times are required to compute $K_k$ and $14$ iteration times for $F_k$ regardless of the length of $N$.
\vspace{-0.2cm}
\begin{table}[ht]
	\caption{Times of inverse convergence of $K_k$ and $F_k$}
	\label{table3:time}
	\vspace{-0.5cm}
	\begin{center}
		\begin{threeparttable}
			\begin{tabular}{c c c c c}
				\toprule
				\textbf{Length of $N$} &
				$50$                   & $100$    & $200$     & $1000$    \\
                    \midrule
				\tabincell{c}{\textbf{Inverse Convergence}                \\ \textbf{Time of $K_k$}}  &
				$[1,35]$               & $[1,85]$ & $[1,185]$ & $[1,985]$ \\

				\specialrule{0.00em}{3pt}{1pt}
				\tabincell{c}{\textbf{Inverse Convergence}                \\ \textbf{Time of $F_k$}}  &
				$[1,36]$               & $[1,86]$ & $[1,186]$ & $[1,186]$ \\
                    \bottomrule
			\end{tabular}
		\end{threeparttable}
	\end{center}
	\vspace{-10pt}
\end{table}
\vspace{-0.5cm}

\begin{figure}[t]
	\centering
 \vspace{-0.2cm}
	\subfigure[Linear network ]{\label{fig:convergence_linear}
		\includegraphics[width=0.22\textwidth]{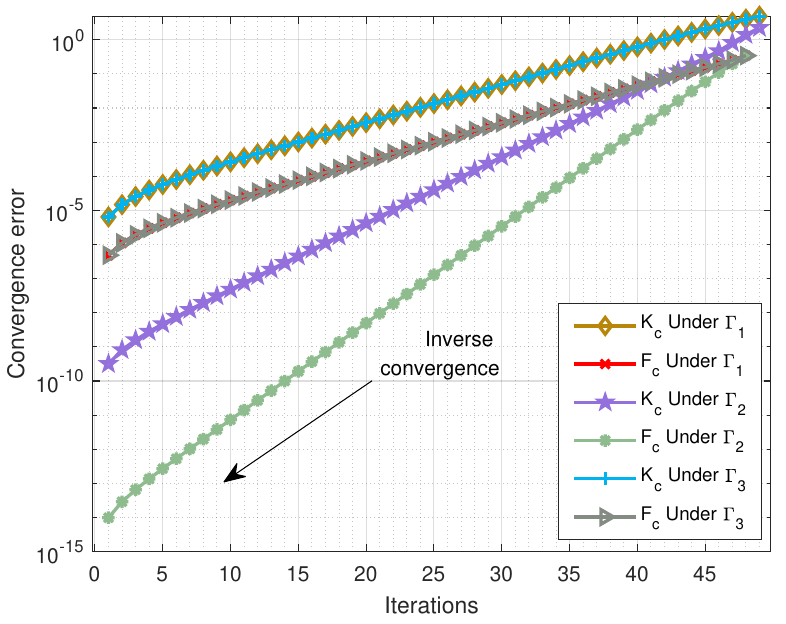}
	}
	\subfigure[Cycle network]{\label{fig:convergence_cycle}
		\includegraphics[width=0.22\textwidth,height=0.174\textwidth]{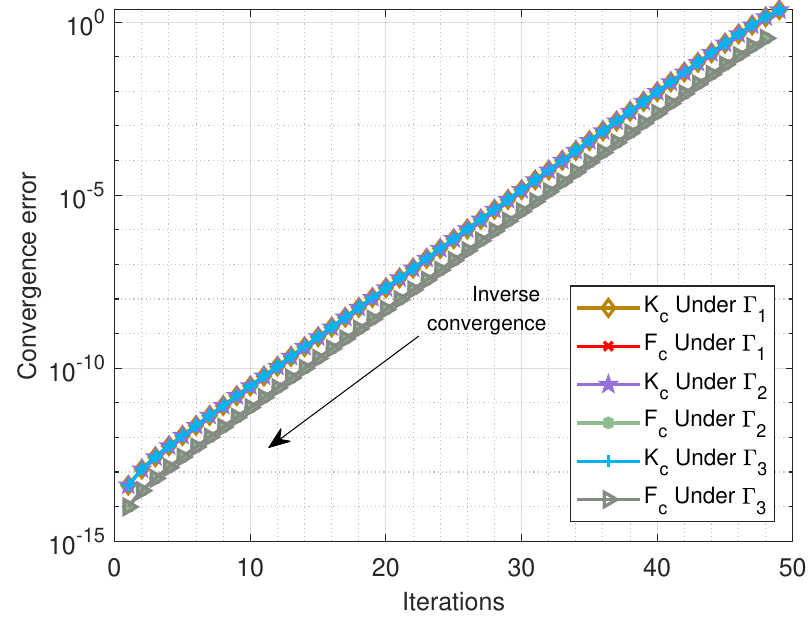}	
	}
	\caption{The convergence error of $K_k$ and $F_k$ under different networks.}
	
	\label{fig:convergence}
	\vspace*{-5pt}
\end{figure}

\subsection{Performance of Learning-based Attack Selection Strategy}\label{VI-B}
\subsubsection{Parameter setting}
Consider the aforementioned consensus process with three agents and the same parameter settings.
Meanwhile, the stopping criterion $\delta$ in Algorithm $1$ and the number of trajectory $T_r$ in Algorithm $2$ are set as $\delta=2$ and $T_r=10$, respectively. In addition, we consider the consensus process with ten agents where the Laplacian matrix is designed as $L =[2~ -1~ 0~ 0~ 0~ 0~ 0~ 0~ 0~ -1;
	-1~ 3~ -1~ -1~ 0~ 0~ 0~ 0~ 0~ 0;
	0~ -1~ 2~ 0~ 0~ 0~ -1~ 0~ 0~ 0;
	0~ -1~ 0~ 3~ -1~ -1~ 0~ 0~ 0~ 0;
	0~ 0~ 0~ -1~ 1~ 0~ 0~ 0~ 0~ 0;
	0~ 0~ 0~ -1~ 0~ 2~ 0~ 0~ -1~ 0;
	0~ 0~ -1~ 0~ 0~ 0~ 2~ -1~ 0~ 0;
	0~ 0~ 0~ 0~ 0~ 0~ -1~ 2~ -1~ 0;
	0~ 0~ 0~ 0~ 0~ -1~ 0~ -1~ 2~ 0;
	-1~ 0~ 0~ 0~ 0~ 0~ 0~ 0~ 0~ 1]$,
and the initial state is set as $x_0=[-1, 12, -5, 5, 2, 7, 7, 0, 9,-10]^{\mathrm{T}}$. We remain the other conditions and set $\delta=0.1$ for Algorithm \ref{algo1} and $\delta=2$ for Algorithm \ref{algo2}.
\subsubsection{Compared algorithms}
In this part, we compare five kinds of algorithms. 
The first approach is the brute force method.
The second approach is the random selection strategy, where both the sequential attack selection strategy $\Gamma$ and the attack signal $\theta$ are randomly generated. The third approach is the sampling-based algorithm where $\Gamma$ is randomly sampled and exploited to compute $\theta$ based on Theorem \ref{th:optimal}, generating the optimal strategy after multiple samples.
The fourth approach is to apply Algorithm \ref{algo1} to obtain the solution of the attack selection strategy and the injected attack signal, respectively. 
The fifth approach is our algorithm, i.e., Algorithm \ref{algo2}, where the prior information about the attack signal in \eqref{theta_without} is used to evaluate and the two-stage learning-based mechanism is adopted to speed up the convergence process.
\subsubsection{Results}
As depicted in Fig. \ref{fig:simulation_3}, we need more than $4.1 \times 10^4$ samples if Algorithm \ref{algo1} is desired to converge to the optimal solution ($J\approx 30$). 
However, if we adopt the two-stage mechanism, we can jump out the RL iteration at a sub-optimal solution ($J \approx 40$) and refine it with our stage $2$ in Algorithm \ref{algo2} to reach $J=30$.
This means that we can reach the optimal solution with only $8.0\times 10^3$ samples, much less than the one-stage learning-based algorithm (Algorithm \ref{algo1}). The result of comparison among several algorithms is shown in Table \ref{table4:comparison}. Note that the sampling-based selection strategy seems better than the proposed algorithm under the three agents. Probably because the number of agents is too small, the considered scenario is simple and easy to obtain the optimal solution based on the random sampling. When ten agents are considered, Table \ref{table4:comparison} further validates the conjecture since the sampling-based strategy does not work well. We find that the effectiveness of the proposed algorithm is not affected by the number of agents. Algorithm \ref{algo2} can reach the optimal objective function value with the minimum samples compared to Algorithm \ref{algo1}. Meanwhile, compared with the sampling-based strategy, Algorithm \ref{algo2} obtains lower objective function value while the time complexity is low. 
\begin{figure}
	\centering
 \vspace{-0.6cm}
	\includegraphics[width=0.40\textwidth]{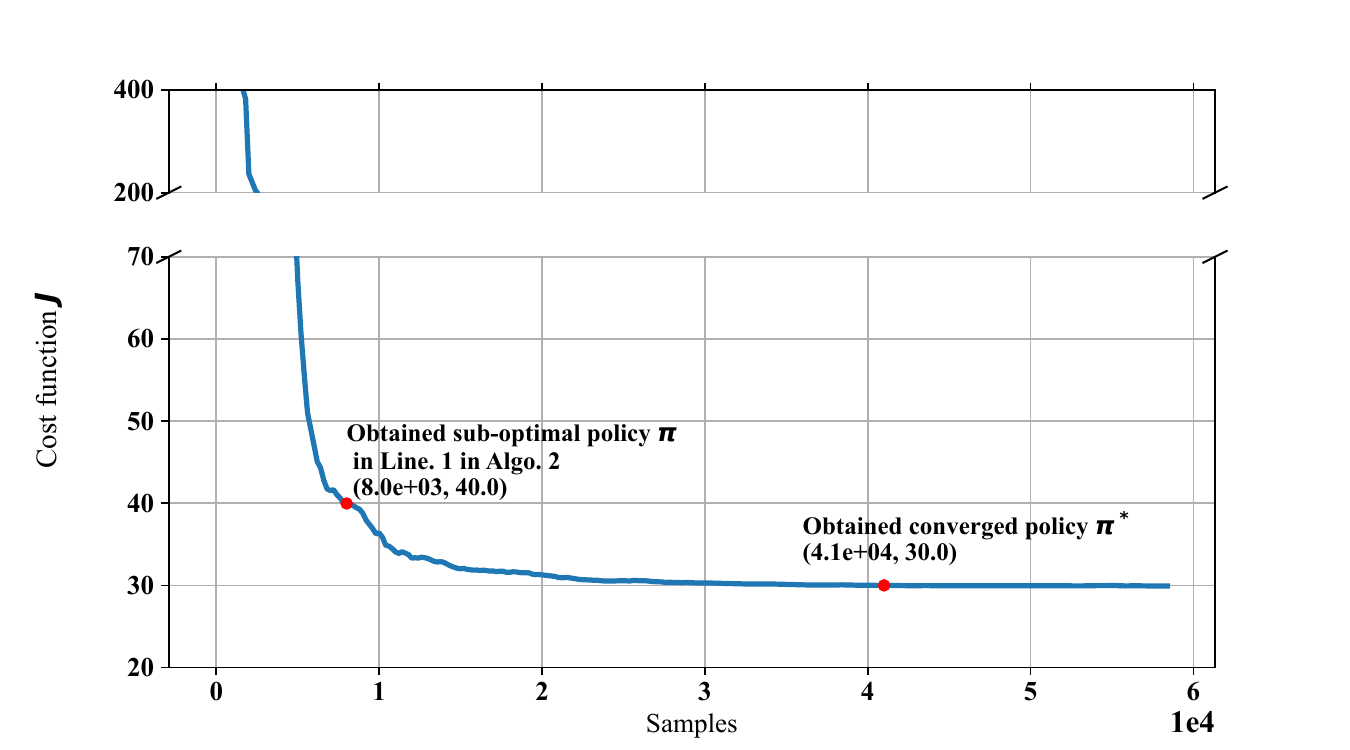}
	\caption{The result of learning-based algorithms}
	\label{fig:simulation_3}
\end{figure}

\vspace{-0.2cm}
 \begin{table}[ht]
	\caption{Comparisons of Algorithms under Different Amount of Agents}
	\label{table4:comparison}
	\vspace{-12pt}
	\begin{center}
		\begin{threeparttable}
			\begin{tabular}{l l l l l}
				\toprule
    \textbf{$\#$Agents} & \multicolumn{2}{c}{THREE} & \multicolumn{2}{c}{TEN}\\
                    \midrule
				\multicolumn{1}{l|}{\textbf{Algorithms}} & \tabincell{c}{Time Complexity\\ (Sample Size) $\downarrow$} & \multicolumn{1}{l|}{$J$ $\downarrow$} &\tabincell{c}{Time\\ Complexity $\downarrow$} & $J$ $\downarrow$\\
                    \midrule
				\tabincell{c}{\textbf{Brute force}}  & $1.4\times 10^{45}$ & N/A & $1.0\times 10^{53}$ & N/A\\ 
    
                    \specialrule{0.00em}{3pt}{1pt}
                    \textbf{Random} & $1.0\times 10^{6}$ & $4039$ & $1.0\times 10^{6}$ & $21311$\\

                    \specialrule{0.00em}{3pt}{1pt}
                    \tabincell{c}{\textbf{Sampling}\\
                    \textbf{-based}}  & $1.0\times 10^{5}$ & $\textbf{27.8}$ & $1.0\times 10^{5}$ & $318.0$\\
                    
				\specialrule{0.00em}{3pt}{1pt}
				\textbf{Algorithm \ref{algo1}} & $4.1\times 10^{4}$ & $30.0$ & $5.1\times 10^{4}$ & $\textbf{295.0}$\\ 
                    \specialrule{0.00em}{3pt}{1pt}
                    \textbf{Algorithm \ref{algo2}} & $\pmb{8.0\times 10^3}$ & $30.0$ & $\pmb{1.8\times 10^4}$ & $\textbf{295.0}$\\ 
                    \bottomrule
			\end{tabular}
		\end{threeparttable}
	\end{center}
	\vspace{-10pt}
\end{table}
\vspace{-0.2cm}



\section{Conclusion}\label{V}
We constructed an optimal sequential false data injection attack co-design framework where the injected attack signals and attack selection strategy are strongly coupled optimization variables and vary with the sampling time in discrete-time systems. Specifically, we first derived an optimal sequential attack signal, which showcases the closed-form explicit expression between the injected attack signal and the attack selection strategy. In addition, we proved the inverse convergence of the critical parameters in the optimal sequential attack signal. Furthermore, with the prior knowledge of the closed-form relationship, we proposed the heuristic learning-based attack algorithms to obtain the sequential feasible solution.
Future work will strive to design the resilient algorithms to defend the system against the proposed optimal sequential FDI attacks.

\appendix
\subsection{Proof of Theorem \ref{th:optimal}}\label{APP-A}

The proof can be completed by solving the Bellman equation backward from time $N+1$ of termination.

When time $k=N+1$, $K_{N+1}=H$, for any $x^{a}_{N+1}\in \mathbb{R}^{n}$, the value function
\vspace{-0.2cm}
\begin{align}\label{N+1}
	   ~~~V(x^{a}_{N+1}&,N+1)\nonumber                               \\
	= & (x^{a}_{N+1}-x^{*})^{\mathrm{T}}H(x^{a}_{N+1}-x^{*})\nonumber \\
	= & (x^{a}_{N+1})^{\mathrm{T}}K_{N+1}(x^{a}_{N+1}) + G_{N+1},
\end{align}
where $G_{N+1}=-2(x^{a}_{N+1})^{\mathrm{T}}K_{N+1}x^{*}+\|x^{*}\|^2$.
Note that the value function $V(x^{a}_{N+1},N+1)$ is the quadratic function with respect to $x^{a}_{N+1}$. Next, with the mathematical induction method, we prove that the value function always satisfies the following form
\vspace{-0.2cm}
\begin{align}\label{ite}
	V(x^{a}_{k+1},k+1)=(x^{a}_{k+1})^{\mathrm{T}}K_{k+1}(x^{a}_{k+1}) + G_{k+1},
\end{align}
where $K_{k+1}$ is the real symmetric positive definite matrix for $k=0,1,\ldots,N$.

Then, we derive the optimal attack signal $\theta_N$ at time $N$. With the obtained value function $ V(x^{a}_{N+1},N+1)$ in \eqref{N+1}, for any $x^{a}_{N}\in \mathbb{R}^{n}$, we have
\vspace{-0.2cm}
\begin{align}\label{N}
	 & ~~~V(x^{a}_{N},N)\nonumber                                                                                         \\
	 & = \mathrm{min}_{\theta_N} \left\{(x^{a}_N-x^{*})^{\mathrm{T}}P_N(x^{a}_N-x^{*}) \right.\nonumber                   \\
	 & ~~~+\left. \|\Gamma_N \theta_N\|^2_{Q_N} +V(x^{a}_{N+1},N+1)
	\right\}\nonumber                                                                                                     \\
	 & = \mathrm{min}_{\theta_N} \left\{
	(x^{a}_N-x^{*})^{\mathrm{T}}P_N(x^{a}_N-x^{*}) \right.\nonumber                                                       \\
	 & ~~~+\left.  (W_N x^{a}_{N} + \Gamma_N \theta_N)^{\mathrm{T}} K_{N+1} (W_N x^{a}_{N} + \Gamma_N \theta_N) \nonumber \right.           \\
	 & ~~~+ \left. \theta_N^{\mathrm{T}} \Gamma_N^{\mathrm{T}} Q_N \Gamma_N \theta_N + G_{N+1}\right\}.
  \vspace{-0.2cm}
\end{align}

Taking the derivative of \eqref{N} with respect to $\theta_N$, for any $x^{a}_{N}\in \mathbb{R}^{n}$, we have
$2 \theta_N^{\mathrm{T}} \Gamma_N^{\mathrm{T}} Q_N \Gamma_N + 2(W_N x^{a}_{N} + \Gamma_N \theta_N)^{\mathrm{T}} K_{N+1} \Gamma_N = 0$. Thus, it can be inferred that
\vspace{-0.2cm}
\begin{align}\label{theta-N}
	\theta_N = -R_N^{-1}(\Gamma_N^{\mathrm{T}} K_{N+1} W_N x^{a}_{N} - \Gamma_N^{\mathrm{T}} K_{N+1} x^{*}),
\end{align}
where $R_N \triangleq \Gamma_N^{\mathrm{T}}(Q_N+K_{N+1})\Gamma_N$.
\eqref{theta-N} is rewritten as
\vspace{-0.2cm}
\begin{align}\label{theta-NN}
	\theta_N = F_N x^{a}_N + M_N,
\end{align}
where $F_N= -[\Gamma_N^{\mathrm{T}}(Q_N+K_{N+1})\Gamma_N]^{-1} \Gamma_N^{\mathrm{T}} K_{N+1} W_N$ and $M_N= [\Gamma_N^{\mathrm{T}}(Q_N+K_{N+1})\Gamma_N]^{-1} \Gamma_N^{\mathrm{T}} K_{N+1} x^{*}$.

When time $k=N$, combined with \eqref{N} and \eqref{theta-NN}, we derive the value function
\vspace{-0.2cm}
\begin{align}\label{N1}
	 & ~~~V(x^{a}_{N},N)\nonumber                                                                   \\
	 & = (x^{a}_N)^{\mathrm{T}} \left\{
	P_k + W_k^{\mathrm{T}} K_{K+1} W_k + 2 W_k^{\mathrm{T}} K_{k+1} \Gamma_k F_k \right. \nonumber  \\
	 & ~~~+ \left. F_k^{\mathrm{T}} \Gamma_k^{\mathrm{T}} (Q_k + K_{k+1} ) \Gamma_k F_k
	\right\}(x^{a}_N)+ G_{N+1}  \nonumber                                                           \\
	 & ~~~- 2 (x^{*})^{\mathrm{T}} P_N x^{a}_N + \|x^{*}\|^2 \nonumber                              \\
	 & ~~~+ \theta_N^{\mathrm{T}} \Gamma_N^{\mathrm{T}} (Q_N + K_{N+1}) \Gamma_N \theta_N \nonumber \\
	 & ~~~+ 2 x_N^{\mathrm{T}} W_N^{\mathrm{T}} K_{N+1} \Gamma_N M_N.
\end{align}
Let
\vspace{-0.2cm}
\begin{align*}
	K_N \triangleq & P_N + W_N^{\mathrm{T}} K_{N+1} W_N
	+ 2 W_N^{\mathrm{T}} K_{N+1} \Gamma_N F_N                                               \\
	               & + F_N^{\mathrm{T}} \Gamma_N^{\mathrm{T}} (Q_N + K_{N+1} ) \Gamma_N F_N
\vspace{-0.2cm}
\end{align*}
and
\vspace{-0.2cm}
\begin{align*}
	G_N \triangleq & G_{N+1} -2 (x^{*})^{\mathrm{T}} P_N x^{a}_N + 2 x_N^{\mathrm{T}} W_N^{\mathrm{T}} K_{N+1} \Gamma_N M_N \\
	               & + \theta_N^{\mathrm{T}} \Gamma_N^{\mathrm{T}} (Q_N + K_{N+1}) \Gamma_N \theta_N + \|x^{*}\|^2.
\end{align*}
Thus, the value function $V(x^{a}_{N},N)$ also satisfies \eqref{ite}.

Then, we derive the optimal attack signal $\theta_{N-1}$ at time $N-1$. With the obtained value function $ V(x^{a}_{N},N)$ in \eqref{N}, for any $x^{a}_{N-1}\in \mathbb{R}^{n}$, we have
\vspace{-0.2cm}
\begin{align}\label{N-1}
	 & ~~~V(x^{a}_{N-1},N-1) \nonumber                                                                                     \\
	 & = \mathrm{min}_{\theta_{N-1}} \left\{(x^{a}_{N-1}-x^{*})^{\mathrm{T}}P_{N-1}(x^{a}_{N-1}-x^{*}) \right. \nonumber   \\
	 & ~~~+\left. \|\Gamma_{N-1} \theta_{N-1}\|^2_{Q_{N-1}} +V(x^{a}_{N},N)
	\right\}\nonumber                                                                                                      \\
	 & = \mathrm{min}_{\theta_{N-1}} \left\{
	(x^{a}_{N-1}-x^{*})^{\mathrm{T}}P_{N-1}(x^{a}_{N-1}-x^{*}) \right. \nonumber                                           \\
	 & ~~~+ \left. \theta_{N-1}^{\mathrm{T}} \Gamma_{N-1}^{\mathrm{T}} Q_{N-1} \Gamma_{N-1} \theta_{N-1} \nonumber \right. \\
	 & ~~~+\left.  (W_{N-1} x^{a}_{N-1} + \Gamma_{N-1} \theta_{N-1})^{\mathrm{T}} \right. \nonumber                        \\
	 & ~~~\left. K_{N} (W_N x^{a}_{N-1} + \Gamma_{N-1} \theta_{N-1}) + G_{N}\right\}.
\end{align}
Taking the derivative of \eqref{N-1} with respect to $\theta_{N-1}$, for any $x^{a}_{N-1}\in \mathbb{R}^{n}$, we have
$2 \theta_{N-1}^{\mathrm{T}} \Gamma_{N-1}^{\mathrm{T}} Q_{N-1} \Gamma_{N-1} + 2(W_{N-1} x^{a}_{N-1} + \Gamma_{N-1} \theta_{N-1})^{\mathrm{T}} K_{N} \Gamma_{N-1} = 0$. Thus, it can be inferred that
\vspace{-0.2cm}
\begin{align}\label{theta-N-1}
	\theta_{N-1} = & -R_{N-1}^{-1}(\Gamma_{N-1}^{\mathrm{T}} K_{N} W_{N-1} x^{a}_{N-1} \nonumber        \\
	               & - \Gamma_{N-1}^{\mathrm{T}} K_{N}P_Nx^{*} +W_N^{\mathrm{T}} K_{N+1} \Gamma_N M_N),
\end{align}
which also can be derived as $\theta_{N-1} = F_{N-1} x^{a}_{N-1} + M_{N-1}$ with $F_{N-1}= -R_{N-1}^{-1} \Gamma_{N-1}^{\mathrm{T}} K_{N} W_{N-1}$ and
\vspace{-0.2cm}
\begin{align*}
	M_{N-1}= R_{N-1}^{-1} \Gamma_{N-1}^{\mathrm{T}} K_{N} (P_Nx^{*} - W_N^{\mathrm{T}} K_{N+1} \Gamma_N M_N).
\end{align*}
When time $k=N-1$, combined \eqref{N-1} with \eqref{theta-N-1}, we derive the value function
\vspace{-0.2cm}
\begin{align*}
	V(x^{a}_{N-1},N-1)=(x^{a}_{N-1})^{\mathrm{T}}K_{N-1}(x^{a}_{N-1}) + G_{N-1},
\end{align*}
where
\vspace{-0.2cm}
\begin{align*}
	K_{N-1} & = P_{N-1} + W_{N-1}^{\mathrm{T}} K_{N} W_{N-1} + 2 W_{N-1}^{\mathrm{T}} K_{N} \Gamma_{N-1} F_{N-1} \\
	        & ~~~+ F_{N-1}^{\mathrm{T}} \Gamma_{N-1}^{\mathrm{T}} (Q_{N-1} + K_{N} ) \Gamma_{N-1} F_{N-1}
\end{align*}
and
\vspace{-0.2cm}
\begin{align*}
	G_{N-1} & =G_{N} -2 (x^{*})^{\mathrm{T}} P_{N-1} x^{a}_{N-1}                                                                 \\
	        & ~~~+ \|x^{*}\|^2 + \theta_{N-1}^{\mathrm{T}} \Gamma_{N-1}^{\mathrm{T}} (Q_{N-1} + K_{N}) \Gamma_{N-1} \theta_{N-1} \\
	        & ~~~+ 2 x_{N-1}^{\mathrm{T}} W_{N-1}^{\mathrm{T}} K_{N} \Gamma_{N-1} M_{N-1}.
\end{align*}

Continue the iterative process for $k=0,1,\ldots, N-2$. Finally, we can obtain the optimal sequential attack signal
\vspace{-0.2cm}
\begin{align*}
	\theta_k = F_k x^{a}_k + M_k,
\end{align*}
and the value function
\vspace{-0.2cm}
\begin{align*}
	V(x^{a}_{k+1},k+1)=(x^{a}_{k+1})^{\mathrm{T}}K_{k+1}(x^{a}_{k+1}) + G_{k+1},
\end{align*}
Thus, the proof is completed.

\subsection{Proof of Lemma \ref{l1}}\label{B}

The proof can be divided into two parts. One is to show the Hermitian matrix $K_k$. The other is to show $K_k\succ 0$. Both are based on the mathematical induction method.

	\textbf{Hermitian}. Let $P_k=Q_k=H_k=I$ for $k=0,1,\ldots,N$. Since $W_k$ is a real symmetrical matrix, we have $W_k=W^{*}_k$.
	When $k=N$, we have $K_{N+1}=I$, which is a real symmetrical matrix.
	We assume that $K_{1}$ is a real symmetrical matrix. Then when $k=0$, it holds that
 \vspace{-0.2cm}
	\begin{align}\label{K0}
		K_0= & P_0 + W_0^{\mathrm{T}} K_{1} W_0 + 2 W_0^{\mathrm{T}} K_{1} \Gamma_0 F_0\nonumber \\
		     & + F_0^{\mathrm{T}} \Gamma_0^{\mathrm{T}} (Q_0 + K_{1} ) \Gamma_0 F_0.
	\end{align}
	Since $F_0=-R_0^{-1}\Gamma_0^{\mathrm{T}} K_{1} W_0 $ is a real matrix
	with $R_0=\Gamma_0^{\mathrm{T}}(Q_0+K_{1})\Gamma_0=R_0^*$, then it can be inferred that
 \vspace{-0.2cm}
	\begin{align*}
		K_0^*= & P_0^* + W_0^{*} K_{1} W_0 + F_0^{*} \Gamma_0^{*} (Q_0 + K_{1} ) \Gamma_0 F_0 \nonumber             \\
		       & + 2F_0^* \Gamma_0^* K_1 W_0                                                                        \\
		=      & P_0 + W_0 K_{1} W_0 + F_0^{\mathrm{T}} \Gamma_0^{\mathrm{T}} (Q_0 + K_{1} ) \Gamma_0 F_0 \nonumber \\
		       & - 2W_0^* K_1 \Gamma_0 R_0^{-1} \Gamma_0^* K_1 W_0                                                  \\
		=      & P_0 + W_0 K_{1} W_0 + F_0^{\mathrm{T}} \Gamma_0^{\mathrm{T}} (Q_0 + K_{1} ) \Gamma_0 F_0           \\
		       & - 2W_0^{\mathrm{T}} K_1 \Gamma_0 R_0^{-1} \Gamma_0^{\mathrm{T}} K_1 W_0                            \\
		=      & K_0.
	\end{align*}
	Thus, $K_0$ is also a real symmetrical matrix. In summary, $K_k$ is a Hermitian matrix for $k=0,1,\ldots,N$.

	\textbf{Positive definite}. When $k=N$, we have $K_{N+1}=I$, which is a positive definite matrix.
	We assume that $K_{1}$ is a positive definite matrix. Then we need to prove $K_0 \succ 0$, i.e.,
 \vspace{-0.2cm}
	{\small{
				\begin{align*}
					P_0 + W_0^{\mathrm{T}} K_{1} W_0 + F_0^{\mathrm{T}} \Gamma_0^{\mathrm{T}} (Q_0 + K_{1} ) \Gamma_0 F_0 + 2 W_0^{\mathrm{T}} K_{1} \Gamma_0 F_0 \succ 0.
				\end{align*}}}
	For the third and fourth parts of $K_0$, we have
 \vspace{-0.2cm}
	\begin{align}\label{F_0}
		 & ~~~~F_0^{\mathrm{T}} \Gamma_k^{\mathrm{T}} (Q_0 + K_{1} ) \Gamma_0 F_0 +  2W_0^{\mathrm{T}} K_{1} \Gamma_0 F_0 \nonumber                                         \\
		 & ~= W_0^{\mathrm{T}} K_{1} \Gamma_0 R_0^{-1} \Gamma_0^{\mathrm{T}} (Q_0 + K_{1} ) \Gamma_0 R_0^{-1} \Gamma_0^{\mathrm{T}} K_{1} W_0 \nonumber                     \\
		 & ~~~- 2W_0^{\mathrm{T}} K_{1} \Gamma_0 R_0^{-1} \Gamma_0^{\mathrm{T}} K_{1} W_0 \nonumber                                                                         \\
		 & \overset{(s.1)}{=} R_0^{-1} W_0^{\mathrm{T}} K_{1} \Gamma_0 [\Gamma_0^{\mathrm{T}} (Q_0 + K_{1} ) \Gamma_0 R_0^{-1} - 2]\Gamma_0^{\mathrm{T}} K_{1} W_0\nonumber \\
		 & \overset{(s.2)}{=} (R_0^{-1})^2 W_0^{\mathrm{T}} K_{1} \Gamma_0 [\Gamma_0^{\mathrm{T}} (Q_0 + K_{1} ) \Gamma_0 - 2R_0]\Gamma_0^{\mathrm{T}} K_{1} W_0\nonumber   \\
		 & ~= -R_0^{-1} W_0^{\mathrm{T}} K_{1} \Gamma_0 \Gamma_0^{\mathrm{T}} K_{1} W_0,
	\end{align}
	where $(s.1)$ follows that $R_0^{-1}$ is a positive real number and $(s.2)$ exploits $R_0^{-1} R_0 =1$.
	Combined with \eqref{F_0}, we next need to prove
 \vspace{-0.2cm}
	\begin{align}\label{non-negative}
		P_0 + W_0^{\mathrm{T}} K_{1} W_0 -R_0^{-1} W_0^{\mathrm{T}} K_{1} \Gamma_0 \Gamma_0^{\mathrm{T}} K_{1} W_0 \succ 0.
	\end{align}

	Note that the sum of the positive definite matrix is still a positive definite matrix and $P_0$ is the given positive definite matrix. We only need to show the positive definiteness of the second and third parts of $K_0$. Consider that $W_0^{\mathrm{T}} K_1 W_0-R_0^{-1} W_0^{\mathrm{T}} K_{1} \Gamma_0 \Gamma_0^{\mathrm{T}} K_{1} W_0$ in \eqref{non-negative} can be rewritten as
 \vspace{-0.2cm}
	\begin{align*}
		 & (W_0^{\mathrm{T}}- R_0^{-1} W_0^{\mathrm{T}} K_{1} \Gamma_0 \Gamma_0^{\mathrm{T}} )K_1 (W_0 -  R_0^{-1} \Gamma_0 \Gamma_0^{\mathrm{T}} K_{1}W_0^{\mathrm{T}} ) \\
		 & + (W_0^{\mathrm{T}}- R_0^{-1} W_0^{\mathrm{T}} K_{1} \Gamma_0 \Gamma_0^{\mathrm{T}})K_1(R_0^{-1}\Gamma_0 \Gamma_0^{\mathrm{T}} K_{1}W_0 ).
	\end{align*}
	Let $Z_0\triangleq W_0 -  R_0^{-1} \Gamma_0 \Gamma_0^{\mathrm{T}} K_{1}W_0^{\mathrm{T}}$. Then the above result can be transformed as
 \vspace{-0.2cm}
	\begin{align*}
		Z_0^{\mathrm{T}}K_1 Z_0 + R_0^{-1} W_0^{\mathrm{T}} K_1 \Gamma_0 (1- R_0^{-1}\Gamma_0^{\mathrm{T}} K_{1} \Gamma_0 ) \Gamma_0^{\mathrm{T}} K_{1}W_0.
	\end{align*}
	Since $R_0=\Gamma_0^{\mathrm{T}}(Q_0+K_{1})\Gamma_0$, we have $1-R_0^{-1}\Gamma_0^{\mathrm{T}} K_{1} \Gamma_0>0$.
	For any non-zero vector $v_0\in \mathbb{R}^{n}$, $Z_0v_0 \neq \mathbf{0}$ holds. Since $K_1$ is a positive definite matrix, there exists
 \vspace{-0.2cm}
	\begin{align*}
		(Z_0v_0)^{\mathrm{T}} K_1 (Z_0v_0) > 0.
	\end{align*}
	Thus, we have $v_0^{\mathrm{T}} (Z_0^{\mathrm{T}} K_1 Z_0) v_0>0$. Since $v_0 \neq \mathbf{0}$, it can be inferred that $Z_0^{\mathrm{T}} K_1 Z_0$ is a positive definite matrix. Similarly, $R_0^{-1} W_0^{\mathrm{T}} K_1 \Gamma_0 (1- R_0^{-1}\Gamma_0^{\mathrm{T}} K_{1} \Gamma_0 ) \Gamma_0^{\mathrm{T}} K_{1}W_0>0$ always holds. Hence, \eqref{non-negative} is proved and we have $K_k=K^{*}_{k} \succ 0$ for $k=0,1,\cdots,N$. The proof is completed.

\subsection{Proof of Lemma \ref{l3}}\label{C}
Note that \eqref{eq:23} is a sufficient condition to ensure that $\tilde{V}_{k-1}-\tilde{V}_k$ decreases along the convergence direction of matrix $K_k-K^{\star}$ in the discrete-time system. Moreover, given $\varphi_k$ in \eqref{range}, if and only if $\tilde{V}_k=0$, the equality will be zero. Then, \eqref{eq:23} can be expressed as
\vspace{-0.2cm}
	\begin{align*}
		\tilde{V}_{k-1} = \tilde{V}_{k}-\varphi_k \tilde{V}_k^{\alpha}=\tilde{V}_{k}(1-\frac{\varphi_k}{\tilde{V}_k^{1-\alpha}}).
	\end{align*}
 
	Let the initial value of the Lyapunov function be
 \vspace{-0.2cm}
	\begin{align*}   \tilde{V}_N=\beta_N(\varphi_N)^{\frac{1}{1-\alpha}}, ~\beta_N>0.
	\end{align*}
	Substituting the value $\tilde{V}_N$ in \eqref{eq:23}, one gets
 \vspace{-0.2cm}
	\begin{align*}
		\tilde{V}_{N-1}= \beta_N(\varphi_N)^{\frac{1}{1-\alpha}} -\varphi_N \tilde{V}_N^{\alpha}=(\beta_N-\beta_N^{\alpha})(\varphi_N)^{\frac{1}{1-\alpha}}.
	\end{align*}
	Define $\beta_{N-1}=\beta_N-\beta_N^{\alpha}$. Then we have $\tilde{V}_{N-1}=\beta_{N-1}-\beta_{N-1}^{\alpha}$. Substituting the above value $\tilde{V}_{N-1}$ into \eqref{eq:23}, it can be inferred that 
 \vspace{-0.2cm}
	\begin{align*}
		\tilde{V}_{N-2}=\beta_{N-2}(\varphi_N)^{\frac{1}{1-\alpha}},
	\end{align*}
	where $\beta_{N-2}= \beta_{N-1}-a_{N-1}\beta_{N-1}^{\alpha}$ and $a_{N-1}=\frac{\varphi_{N-1}}{\varphi_N}$.
 
	Similarly, with a recursive relation of $\beta_k$ for $1\leq k \leq N$, $\tilde{V}_{k-1}$ can be expressed as
 \vspace{-0.2cm}
	\begin{align}\label{tilV}
		\tilde{V}_{k-1}=\beta_{k-1}(\varphi_N)^{\frac{1}{1-\alpha}},
	\end{align}
	where $\beta_{k-1}=\beta_{k} - a_{k}\beta_{k}^{\alpha}$ and $a_{k}=\frac{\varphi_{k}}{\varphi_N}$.
	If $\tilde{V}_k$ and $\varphi_k$ satisfy \eqref{range}, then we obtain
 \vspace{-0.2cm}
	\begin{align*}
		\beta_{k-1} \leq & \beta_{k} -(1-\epsilon )\beta_k^{\alpha},\nonumber                  \\
		=                & \epsilon \beta_k^{\alpha} - (1-\beta_k^{1-\alpha})\beta_k^{\alpha}.
	\end{align*}
 
	Since $\tilde{V}_{k-1}=\tilde{V}(K_{k-1}-K^{\star})$ is positive definite, $\beta_{k-1}$ is non-negative. When $\beta_{k-1}=0$, it follows that
 \vspace{-0.2cm}
	\begin{align}\label{ep}
		\epsilon=1-\beta_k^{1-\alpha} \Leftrightarrow \beta_k^{1-\alpha}=1-\epsilon.
	\end{align}
	Let $k=\xi^{\star}$ be the smallest integer for which \eqref{ep} is satisfied, i.e., $\beta_{\xi^{\star}}=(1-\epsilon)^{\frac{1}{1-\alpha}}$. In other words, $\beta_{\xi^{\star}-1}=0$. Thus, it is easy to obtain that $\tilde{V}_{k}=0$ with $\beta_k=0$ for $0\leq k<\xi^{\star}$. Consequently, $K_k$ converges to $K^{\star}$ inversely in finite-time $\xi^{\star}$.
	The proof is completed.

\vspace{-0.2cm}
\subsection{Proof of Theorem \ref{th:convergence}}\label{D}
The proof is completed by utilizing discrete-time Lyapunov analysis.
With Corollary \ref{c1} and Lemma \ref{l3}, we just need to find a Lyapunov function $\tilde{V}_k$, which satisfies the convergence condition in \eqref{eq:23}. The details are shown below.

We define the non-negative Lyapunov function $\tilde{V}_k$ as
\vspace{-0.2cm}
\begin{align}
	\tilde{V}_k= \frac{1}{2} \text{trace}[(K_k-K^{\star})^{\mathrm{T}}(K_k-K^{\star})],
\end{align}
where $K^{\star}$ is the steady-state matrix of inverse convergence. Let $\tilde{K}_k \triangleq K_k-K^{\star}$. Then we have $\tilde{V}_k=\frac{1}{2} \text{trace}[\tilde{K}_k^{\mathrm{T}}\tilde{K}_k]$. Thus, $\tilde{V}_{k-1}-\tilde{V}_{k}$ can be rewritten as
\vspace{-0.2cm}
\begin{align}\label{difference}
	 & ~~~\tilde{V}_{k-1}-\tilde{V}_{k}\nonumber                                                                                                                                                                       \\
	 & = \frac{1}{2} \text{trace} [(K_{k-1}-K^{\star})^{\mathrm{T}}(K_{k-1}-K^{\star})\nonumber                                                                                                                        \\
	 & ~~~-(K_{k}-K^{\star})^{\mathrm{T}}(K_{k}-K^{\star})],\nonumber                                                                                                                                                  \\
	 & = \frac{1}{2} \text{trace} [\tilde{K}_{k-1}^{\mathrm{T}} \tilde{K}_{k-1} - \tilde{K}_{k}^{\mathrm{T}} \tilde{K}_{k}],\nonumber                                                                                  \\
	 & = \frac{1}{2} \text{trace} [(\tilde{K}_{k-1}- \tilde{K}_{k})^{\mathrm{T}} (\tilde{K}_{k-1}+ \tilde{K}_{k})],\nonumber                                                                                           \\
	 & =-\frac{\frac{1}{2} \text{trace} [(\tilde{K}_{k}- \tilde{K}_{k-1})^{\mathrm{T}} (\tilde{K}_{k-1}+ \tilde{K}_{k})]}{\{\frac{1}{2}\text{trace}[\tilde{K}_k^{\mathrm{T}}\tilde{K}_k]\}^{1/p}} (\tilde{V}_k)^{1/p}.
\end{align}
Let $\frac{1}{2}<1/p <1$. To apply Lemma \ref{l3}, we need to find the upper bound of the term on the right side of equality \eqref{difference}.

Consider the term,
\vspace{-0.2cm}
\begin{align}\label{eq34}
	                      & \frac{\frac{1}{2} \text{trace} [(\tilde{K}_{k-1}- \tilde{K}_{k})^{\mathrm{T}} (\tilde{K}_{k-1}+ \tilde{K}_{k})]}{\{\frac{1}{2}\text{trace}[\tilde{K}_k^{\mathrm{T}}\tilde{K}_k]\}^{1/p}} \nonumber                                                               \\
	\overset{(s.1)}{\leq} & \frac{\frac{1}{2}\sqrt{\text{trace}[(\tilde{K}_{k-1}^{\mathrm{T}}-\tilde{K}_k^{\mathrm{T}})^2]}\sqrt{\text{trace}[(\tilde{K}_{k-1}^{\mathrm{T}}+\tilde{K}_k^{\mathrm{T}})^2]}}{\{\frac{1}{2}\text{trace}[\tilde{K}_k^{\mathrm{T}}\tilde{K}_k]\}^{1/p}},\nonumber \\
	\overset{(s.2)}{=}    & \{\tilde{V_k}+\tilde{V}_{k-1}-\text{trace}[\tilde{K}_k^{\mathrm{T}}\tilde{K}_{k-1}]\}^{\frac{1}{2}}\nonumber                                                                                                                   \\
	                      & \frac{\{\tilde{V_k}+\tilde{V}_{k-1}+\text{trace}[\tilde{K}_k^{\mathrm{T}}\tilde{K}_{k-1}]\}^{\frac{1}{2}}}{(\tilde{V}_k)^{1/p}},
\end{align}
where $(s.1)$ follows the fact that $\text{trace}[A^{\mathrm{T}}B]\leq \{\text{trace}[A^{\mathrm{T}}A]\}^{\frac{1}{2}} \{[\text{trace}[B^{\mathrm{T}}B]\}^{\frac{1}{2}}$ for any $n$-order real symmetric matrix, $(s.2)$ follows the fact that $\text{trace}[\tilde{K}_k^{\mathrm{T}}\tilde{K}_k]=2 \tilde{V}_k$ and $\text{trace}[A]=\text{trace}[A^{\mathrm{T}}]$.
Since $\tilde{V}_{k-1}= \beta_{k-1}(\varphi_N)^{\frac{1}{1-\alpha}}$ in \eqref{tilV} and $\tilde{V}_{k}=\beta_{k}(\varphi_N)^{\frac{1}{1-\alpha}}$, one gets
\vspace{-0.1cm}
\begin{align}
	\tilde{V}_{k-1}=\eta_k \tilde{V}_k,
\end{align}
with $\eta_k=\frac{\beta_{k-1}}{\beta_{k}}=\frac{\text{trace}[\tilde{K}_{k-1}^{\mathrm{T}}\tilde{K}_{k-1}]}{\text{trace}[\tilde{K}_k^{\mathrm{T}}\tilde{K}_k]}$. In addition, with $\zeta_k=\frac{\text{trace}[\tilde{K}_k^{\mathrm{T}}\tilde{K}_{k-1}]}{\text{trace}[\tilde{K}_k^{\mathrm{T}}\tilde{K}_{k}]}$, we have
\vspace{-0.2cm}
\begin{align}\label{eq37}
	\text{trace}[\tilde{K}_k^{\mathrm{T}}\tilde{K}_{k-1}]=\zeta_k \tilde{V}_k.
\end{align}
Thus, \eqref{eq34} can be rewritten as
\vspace{-0.2cm}
\begin{align}\label{eq38}
	 & \frac{\frac{1}{2} \text{trace} [(\tilde{K}_{k-1}- \tilde{K}_{k})^{\mathrm{T}} (\tilde{K}_{k-1}+ \tilde{K}_{k})]}{\{\frac{1}{2}\text{trace}[\tilde{K}_k^{\mathrm{T}}\tilde{K}_k]\}^{1/p}}\nonumber \\
	 & \leq \sqrt{(1+\eta_k)^2-(\zeta_k)^2} (\tilde{V}_k)^{1-1/p}.
\end{align}
Since $\tilde{K}_k$ and $\tilde{K}_{k-1}$ are $n$-order real symmetric matrix from Lemma \ref{l1}, it follows that
\begin{align}\label{eq39}
	\text{trace}[\tilde{K}_k^{\mathrm{T}}\tilde{K}_{k-1}] \leq & \left\{\text{trace}[\tilde{K}_k^{\mathrm{T}}\tilde{K}_{k}]\right\}^{\frac{1}{2}} \left\{\text{trace}[\tilde{K}_{k-1}^{\mathrm{T}}\tilde{K}_{k-1}]\right\}^{\frac{1}{2}},\nonumber \\
	\leq                                                       & \sqrt{2\tilde{V}_k}\sqrt{2\tilde{V}_{k-1}}.
\end{align}
Combined \eqref{eq37} with \eqref{eq39}, we have
\vspace{-0.2cm}
\begin{align*}
	\zeta_k\leq 2\sqrt{\frac{\tilde{V}_{k-1}}{\tilde{V}_k}}=2\sqrt{\eta_k}.
\end{align*}
Hence, \eqref{eq38} can be rewritten as
\vspace{-0.2cm}
\begin{align*}
\frac{\frac{1}{2} \text{trace} [(\tilde{K}_{k-1}- \tilde{K}_{k})^{\mathrm{T}} (\tilde{K}_{k-1}+ \tilde{K}_{k})]}{\{\frac{1}{2}\text{trace}[\tilde{K}_k^{\mathrm{T}}\tilde{K}_k]\}^{1/p}}\leq |\eta_k-1| (\tilde{V}_k)^{1-1/p}.
\end{align*}
One can see that $\tilde{V}_{k-1}-\tilde{V}_{k}=0$ if $\eta_k=1$ or $\tilde{V}_{k}=0$. Furthermore, if and only if $\tilde{V}_{k}=0$, we conclude that
\vspace{-0.2cm}
\begin{align*}
	\tilde{V}_{k-1}-\tilde{V}_{k}=0 \Leftrightarrow \tilde{K}_k=0 \Leftrightarrow K_k=K^{\star}.
\end{align*}
It shows that $\tilde{V}_{k}=0$ if and only if $K_k$ is at the equilibrium matrix $K^{\star}$.

We consider $\tilde{K}_{k-1}=\mu_k \tilde{K}_k$ with $\mu_k>0$. To guarantee that \eqref{eq:23} holds, combined with \eqref{difference}, we have
\vspace{-0.2cm}
\begin{align*}
	\tilde{V}_{k-1}-\tilde{V}_{k}= & (\mu_k^2-1)\tilde{V}_{k},\nonumber                       \\
	=                              & -[(1-\mu_k^2)\tilde{V}_{k}^{1-1/p}] \tilde{V}_{k}^{1/p},
\end{align*}
where $(1-\mu_k^2)\tilde{V}_{k}^{1-1/p}$ is the function of $\tilde{V}_{k}^{1-\alpha}$ with $\alpha=1/p$. If $\mu_k$ satisfies
\vspace{-0.2cm}
\begin{align*}
	\mu_k=\frac{(2\tilde{V}_{k})^{1-1/p}-\kappa}{(2\tilde{V}_{k})^{1-1/p}+\kappa},
\end{align*}
\vspace{-0.2cm}
where $\kappa>0$, then we have
\vspace{-0.2cm}
\begin{align*}
	\varphi_k \triangleq (1-\mu_k^2)\tilde{V}_{k}^{1-1/p}=4\kappa \frac{2^{1-1/p}(\tilde{V}_k)^{2-2/p}}{[(2\tilde{V}_k)^{1-1/p}+\kappa]^2}.
\end{align*}
From \eqref{range}, $\tilde{V}_k$ is decreasing if $0<\varphi_k<\frac{4\kappa}{2^{1-1/p}}$ for $\tilde{V}_{k}^{1-1/p}\in(0,\tilde{V}_{N}^{1-1/p})$. In addition, the ratio $\frac{\varphi_k}{\varphi_N}$ is bounded and higher than a positive constant in the open interval $(0,1)$ since
\vspace{-0.2cm}
\begin{align*}
	\frac{\varphi_k}{\varphi_N}=\left(\frac{\tilde{V}_k}{\tilde{V}_N}\right)^{2-2/p} \frac{[(2\tilde{V}_N)^{1-1/p}+\kappa]^2}{[(2\tilde{V}_k)^{1-1/p}+\kappa]^2}.
\end{align*}

Therefore, $K_k$ will converge to $K^{\star}$ inversely for $0\leq k< \xi^{\star}$ where the positive integer $\xi^{\star}$ satisfies \eqref{ep}.
Finally, the proof is completed.

\addtolength{\textheight}{-12cm}   







\bibliographystyle{IEEEtran}
\bibliography{reference}

\end{document}